\numberwithin{equation}{section}
\newtheorem{theorem}{Theorem}[section]
\newtheorem{lemma}[theorem]{Lemma}
\newtheorem{proposition}[theorem]{Proposition}
\newtheorem{corollary}[theorem]{Corollary}
\newtheorem{definition}[theorem]{Definition}
\newtheorem{thm}{Theorem}
\theoremstyle{definition}
\newtheorem{remark}[theorem]{Remark}
\newcommand{\ep}{\varepsilon}
\newcommand{\tht}{\theta}
\newcommand{\ld}{\lambda}
\newcommand{\qq}{\qquad}
\newcommand{\R}{\mathbb{R}}
\newcommand{\N}{\mathbb{N}}
\newcommand{\rd}{\mathbb R ^d}
\newcommand{\vc}{{\Vec{c}}}
\newcommand{\vs}{{\Vec{s}}}
\newcommand{\vt}{{\Vec{t}}}
\newcommand{\vsig}{{\Vec{\sigma}}}
\begin{document}
\title[Hurwitz-Lerch type of Euler-Zagier double zeta distributions]{Hurwitz-Lerch zeta and Hurwitz-Lerch type of Euler-Zagier double zeta distributions}
\author[T.~Nakamura]{Takashi Nakamura}
\address[T.~Nakamura]{Department of Liberal Arts, Faculty of Science and Technology, Tokyo University of Science, 2641 Yamazaki, Noda-shi, Chiba-ken, 278-8510, Japan}
\email{nakamuratakashi@rs.tus.ac.jp}
\urladdr{https://sites.google.com/site/takashinakamurazeta/}
\subjclass[2010]{Primary 60E05, 60E10, Secondary 1M32, 11M35}
\keywords{Hurwitz-Lerch zeta distributions, }
\maketitle

\begin{abstract}
In this paper, we give Hurwitz-Lerch zeta distributions with $0 < \sigma \ne 1$ by using the Gamma function. Moreover, we define Hurwitz-Lerch type of Euler-Zagier double zeta distributions not only in the region of absolute convergence but also the outside of the region of absolute convergence. 
\end{abstract}

\section{Introduction}

\subsection{Zeta functions}
First, we introduce the following function. 
\begin{definition}[see {\cite[p.~53, (1)]{Er}}]\label{def:ler}
For $0 < a \le 1$, $s,z \in {\mathbb{C}}$ and $0< |z|\le 1$, the Hurwitz-Lerch zeta function $\Phi(s,a,z)$ is defined by
\begin{equation}
\Phi (s,a,z) := \sum_{n=0}^{\infty} \frac{z^n}{(n+a)^s}, \qquad s := \sigma + {\rm{i}}t, 
\quad \sigma >1 , \quad t \in {\mathbb{R}}.
\end{equation}
\end{definition}
The Riemann zeta function $\zeta (s)$ and the Hurwitz zeta function $\zeta (s,a)$ are expressed as $\Phi (s,1,1)$ and $\Phi (s,a,1)$, respectively. The Dirichlet series of $\Phi(s,a,z)$ converges absolutely in the right half-plane $\sigma >1$ and uniformly in each compact subset of this half-plane. The function $\Phi(s,a,z)$ with $z \ne 1$ is analytically continuable to the whole complex plane. However, $\zeta (s,a)$ is analytic for all $s \in {\mathbb{C}}$ except for a simple pole at $s=1$ with residue $1$. On the other hand, Lerch showed that
$$
\biggl[ \frac{d}{ds} \zeta (s,a) \biggr]_{s=0} = \log \Gamma (a) - \frac{1}{2} \log (2\pi) .
$$
Hence the gamma function $\Gamma (a)$ can be written by the Hurwitz zeta function. 

Next we define the following double zeta function as a double sum and two variable version of $\Phi(s,a,z)$.
\begin{definition}[see {\cite[(1)]{KomoQua}}]\label{def:ler}
For $0 < a \le 1$, $s_1,s_2, z_1, z_2 \in {\mathbb{C}}$ and $0<|z_1|, |z_2| \le 1$, the Hurwitz-Lerch type of Euler-Zagier double zeta function $\Phi_2(s_1,s_2,a,z_1,z_2)$ is defined by
\begin{equation}\label{eq:defezhdz}
\Phi_2(s_1,s_2,a,z_1,z_2) := \sum_{m=0}^\infty \frac{z_1^m}{(m+a)^{s_1}} \sum_{n=1}^\infty \frac{z_2^{n-1}}{(m+n+a)^{s_2}}.
\end{equation}
\end{definition}
Note that the function $\Phi_2 (s_1,s_2,a,z_1,z_2)$ can be continued meromorphically to the whole space ${\mathbb{C}}^2$ by Komori in \cite[Theorem 3.14]{KomoQua} (see also Lemma \ref{lem:ezhd1}). It should be mentioned that Atkinson \cite{Atkinson} obtained an analytic continuation for the function $\zeta_2(s_1,s_2 \,;a) := \Phi_2(s_1,s_2,a,1,1)$ with $a=1$ in order to study the mean square of the Riemann zeta function $\int_0^T |\zeta (1/2+it)|^2dt$ in 1945. 

As a generalization of the double zeta functions above to $\rd$-valued, Aoyama and Nakamura \cite{AN12} defined the following Multidimensional Shintani zeta function $Z_S(\vs)$. Let $d,m,r\in\N$, $\vs\in\mathbb{C}^d$ and $(n_1, \ldots , n_r)\in\mathbb{Z}_{\ge 0}^{r}$.
For $\ld_{lj}, u_j > 0$, $\vc_l \in {\mathbb{R}}^d$, where $1\le j\le r$ and $1\le l\le m$, and 
a function $\tht (n_1, \ldots , n_r)\in{\mathbb{C}}$
satisfying $|\tht (n_1, \ldots , n_r)| = O((n_1+ \cdots +n_r)^{\ep})$, for any $\ep >0$, 
we define a multidimensional Shintani zeta function given by
\begin{equation}
Z_S (\vs) := \sum_{n_1 ,\ldots, n_r =0}^{\infty} 
\frac{\tht (n_1,\ldots , n_r)}{\prod_{l=1}^m (\lambda_{l1}(n_1+u_1) 
+\cdots+\lambda_{lr}(n_r+u_r) )^{\langle \vc_l,\vs \rangle}}.
\label{eq:def2}
\end{equation}
The series defined by $\eqref{eq:def2}$ converges absolutely in the region $\min_{1\le l\le m}$ $\Re\langle \vc_l, \vs\rangle >r/m$. This is a multidimensional case of the Shintani multiple zeta functions, when the coefficient $\tht (n_1,\ldots , n_r)$ in \eqref{eq:def2} is a product of Dirichlet characters, 
considered by Hida \cite{Hida} (see also a survey \cite{Masmz}). 

\subsection{Zeta distributions}
Let $\mu$ be a distribution (probability measure) on ${\mathbb{R}}^d$, namely, $\int_{{\mathbb{R}}^d} \mu (dy) =1$. For $z \in {\mathbb{R}}^d$, the characteristic function $\widehat{\mu} (z)$ of $\mu$ is defined by $\widehat{\mu} (z) := \int_{{\mathbb{R}}^d} e^{{\rm{i}}\langle z,y\rangle} \mu (dy)$, where $\langle \cdot,\cdot \rangle$ is the inner product on ${\mathbb{R}}^d$. By the definition of characteristic function, we can see that the absolute value of characteristic function is not greater than 1 (see for example \cite[Proposition 2.5]{S99}). 

Recently, a class of distribution generated by the Hurwitz zeta function is introduced and studied by Hu, Iksanov, Lin, and Zakusylo \cite{Hu06}. Put the corresponding normalized function and a discrete one-sided random variable $X_a (\sigma)$ as follows:
\begin{equation}
g_{\sigma,a}(t):=\frac{\zeta (\sigma +{\rm i}t,a)}{\zeta (\sigma,a)}, \qq t\in\R,
\label{eq:defhuzd}
\end{equation}
where $\sigma >1$ and
\begin{equation*}
{\rm Pr} \bigl(X_a (\sigma)=\log (n+a) \bigr)=\frac{(n+a)^{-\sigma}}{\zeta (\sigma,a)},
\qq n\in\N\cup\{0\}.
\end{equation*}
Then $g_{\sigma,a} (t)$ is known to be a characteristic function of $-X_a (\sigma)$ (see {\cite[Theorem 1]{Hu06}). Therefore, we can define the following distribution.
\begin{definition}
A distribution $\mu_{\sigma,a}$ on $\R$ is said to be a Hurwitz zeta distribution with parameter $(\sigma,a)$ 
if it has $g_{\sigma,a} (t)$ as its characteristic function.
\end{definition}

Put $g_{\sigma}(t):=\zeta (\sigma +{\rm i}t)/\zeta (\sigma)$, $t\in\R$, then $g_{\sigma}(t)$ is known to be a characteristic function of the Riemann zeta distribution $\mu_{\sigma}$. Aoyama and Nakamura \cite{AN12} defined  the following Multidimensional Shintani zeta distribution. Let $\tht (n_1, \ldots , n_r)$ be a nonnegative or non-positive definite function and $\vsig$ satisfy $\min_{1\le l\le m}\langle \vc_l, \vsig\rangle >r/m$. Then the multidimensional Shintani zeta random variable $X_{\vsig}$ with probability distribution on $\rd$ given by
\begin{align*}
{\rm Pr} \biggl(X_{\vsig}= 
\biggl(\,& -\sum_{l=1}^m c_{l1} \log \bigl(\lambda_{l1}(n_1+u_1) +\cdots+\lambda_{lr}(n_r+u_r)\bigr),\\
&\dots , -\sum_{l=1}^m c_{ld} \log \bigl(\lambda_{l1}(n_1+u_1) +\cdots+\lambda_{lr}(n_r+u_r) \bigr)
\,\biggr) \biggr)\\
=\ \ \ \ &\frac{\tht (n_1,\ldots , n_r)}{Z_S(\vsig)}
\prod_{l=1}^m \bigl(\lambda_{l1}(n_1+u_1) +\cdots+\lambda_{lr}(n_r+u_r) \bigr)^{-\langle \vc_l,\vsig \rangle}.
\end{align*}
In \cite[Theorem 3]{AN12}, the following is proved. Let $X_{\vsig}$ be a multidimensional Shintani zeta random variable.
Then its characteristic function $g_{\vsig}$ is given by
\begin{align*}
g_{\vsig}(\vt)=\frac{Z_S(\vsig +{\rm i}\vt)}{Z_S(\vsig)}, \qq \vt\in\rd.
\end{align*}
They also showed the Multidimensional Shintani zeta distribution contains some fundamental probability distributions on $\R$ such as binomial and Poisson distributions (see \cite[Example 3]{AN12}). 

\subsection{Aim}
Ramachandra and Sankaranarayanan showed the following inequality.
\begin{thm}[{\cite[Theorem 1]{RaSa}}]
Let $1/2 \le \sigma_0 <1$, $0\le \theta <2\pi$, $\varepsilon >0$. Let $l$ be an integer constant satisfying $l \ge 6$, $y_0$ be the positive solution of $e^{y_0}=2y_0+1$, $c_2 := 2y_0(2y_0+1)^{-2}$ and $0< c_1 < c_2$. Then for $T \ge T_0$ depending on these constants, we have
$$
\Re \bigl( e^{-{\rm{i}}\theta} \zeta (\sigma_0+{\rm{i}}t, a) \bigr) \ge
\frac{c_0 c_1}{1-\sigma_0} (\log t_0)^{1-\sigma_0}
$$
for at least one $t_0$ in $T^\varepsilon/2 \le t_0 \le 3T/2$, where $c_0:= \cos (2\pi /l) (\log l)^{\sigma_0-1}$.
\end{thm}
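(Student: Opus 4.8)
The plan is to prove this $\Omega$-result for $\Re(e^{-\mathrm{i}\theta}\zeta(\sigma_0+\mathrm{i}t,a))$ by a contradiction argument combined with a carefully chosen kernel integral, following the classical approach of Ramachandra for lower bounds of zeta-type functions. First I would pass from the Hurwitz zeta function to a Dirichlet polynomial approximation: for $\sigma_0$ in the critical strip one has an approximate functional equation or, more simply, a truncated Dirichlet series $\zeta(\sigma_0+\mathrm{i}t,a)=\sum_{n\le N}(n+a)^{-\sigma_0-\mathrm{i}t}+O(\text{error})$ valid uniformly for $t$ in a suitable range, with $N$ a small power of $T$. The choice $T^\varepsilon/2\le t_0\le 3T/2$ is exactly what gives room for such an approximation with negligible error. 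The key point is that the constant term $n=0$, i.e. the summand $a^{-\sigma_0-\mathrm{i}t}=a^{-\sigma_0}e^{-\mathrm{i}t\log a}$, dominates — or rather, we want to detect a $t_0$ where $e^{-\mathrm{i}\theta}$ times the whole sum has large real part.

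The main device is to integrate $\Re(e^{-\mathrm{i}\theta}\zeta(\sigma_0+\mathrm{i}t,a))$ against a nonnegative kernel $K(t)$ supported (essentially) in $[T^\varepsilon/2,3T/2]$ and estimate the resulting average from below. Writing $\zeta(\sigma_0+\mathrm{i}t,a)=\sum_{n\ge 0}(n+a)^{-\sigma_0}(n+a)^{-\mathrm{i}t}$, the integral $\int K(t)(n+a)^{-\mathrm{i}t}\,dt$ is the Fourier transform $\widehat{K}(\log(n+a))$, so the average becomes $\sum_{n\ge0}(n+a)^{-\sigma_0}\Re\!\left(e^{-\mathrm{i}\theta}\widehat{K}(-\log(n+a))\right)$ plus controlled error. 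One then designs $K$ — for instance a suitable power of a Fejér-type kernel, which is where the integer $l\ge 6$, the quantity $y_0$ with $e^{y_0}=2y_0+1$, and the constants $c_1<c_2=2y_0(2y_0+1)^{-2}$ enter, these being the optimized parameters in Ramachandra's kernel construction — so that $\widehat{K}$ is nonnegative (or has nonnegative real part after the $e^{-\mathrm{i}\theta}$ twist) and has a definite lower bound coming from the tail of the series, namely $\sum_{n}(n+a)^{-\sigma_0}$ truncated appropriately, which contributes $\gg (\log T)^{1-\sigma_0}/(1-\sigma_0)$. Dividing by $\int K$, we conclude the average is at least $c_0c_1(\log t_0)^{1-\sigma_0}/(1-\sigma_0)$ for $t_0$ of size a power of $T$; since the average over a set is bounded by the supremum, there exists $t_0$ in the stated range achieving the bound.

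The hard part will be making the kernel choice precise so that the Fourier side is genuinely nonnegative for \emph{every} $\theta\in[0,2\pi)$ simultaneously with the lower bound surviving — this is the delicate optimization that forces the specific constants $l$, $y_0$, $c_1$, $c_2$, and it requires controlling the error terms (both the truncation error in the Dirichlet series and the tails of the kernel) so that they are dominated by the main term $\asymp(\log T)^{1-\sigma_0}$. A secondary technical point is that $\zeta(s,a)$ is only \emph{conditionally} convergent (indeed divergent) for $\sigma_0<1$, so one must work throughout with the analytically continued function via a truncated series plus an integral remainder, and verify that the remainder contributes $o\big((\log T)^{1-\sigma_0}\big)$ after integration against $K$. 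Once these estimates are in place, the contradiction is immediate: if $\Re(e^{-\mathrm{i}\theta}\zeta(\sigma_0+\mathrm{i}t,a))$ were smaller than the claimed bound for \emph{all} $t_0$ in $[T^\varepsilon/2,3T/2]$, then the kernel average would be too small, contradicting the lower bound obtained from the $n$-series.
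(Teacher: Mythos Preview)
The paper does not prove this statement at all; it is simply quoted verbatim from Ramachandra and Sankaranarayanan \cite[Theorem~1]{RaSa} and then used, via the consequence \eqref{ieq:hur0}, to motivate the need for a modified characteristic function in the strip $1/2\le\sigma<1$. There is therefore no ``paper's own proof'' to compare your proposal against.

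For what it is worth, your outline is a reasonable high-level description of the Ramachandra kernel method that underlies the original argument in \cite{RaSa}: one averages a Dirichlet polynomial approximation against a nonnegative kernel whose Fourier transform is controlled, and the specific constants $l$, $y_0$, $c_1$, $c_2$ arise from the optimized kernel construction and the device of handling an arbitrary angle $\theta$ via $l$th roots of unity (this is where $\cos(2\pi/l)$ and the condition $l\ge 6$ come from). But your write-up remains a sketch rather than a proof: the precise kernel, the exact truncation length, the verification that the error terms are $o\bigl((\log T)^{1-\sigma_0}\bigr)$, and the mechanism by which the twist $e^{-\mathrm{i}\theta}$ is absorbed are all left as assertions. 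If you intend to actually supply a proof here, you would need to reproduce the detailed estimates of \cite{RaSa}; if, as in the present paper, you only need the \emph{consequence} that $|\zeta(\sigma+\mathrm{i}t_0,a)|>|\zeta(\sigma,a)|$ for some $t_0$, then a citation suffices and no proof is required.
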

Hence for any $0 <a \le 1$ and $1/2 \le \sigma <1$, there exists $t_0 \in {\mathbb{R}}$ such that 
\begin{equation}\label{ieq:hur0}
|\zeta (\sigma+{\rm{i}}t_0, a)| > |\zeta (\sigma, a)| .
\end{equation}
Therefore, $g_{\sigma,a}(t)$ defined by (\ref{eq:defhuzd}) is not a characteristic function when $1/2 \le \sigma <1$ by (\ref{ieq:hur0}) and the fact that the absolute value of characteristic function is not greater than 1. Despite of this fact, Nakamura \cite{NaPa} proved that the following function
$$
\frac{\sigma}{\zeta (\sigma)} \frac{\zeta (\sigma - {\rm{i}}t)}{\sigma - {\rm{i}}t} =
\frac{\sigma}{\sigma - {\rm{i}}t} \frac{\zeta (\sigma - {\rm{i}}t)}{\zeta (\sigma)}
$$
is a characteristic function for any $0 < \sigma \ne 1$. It should be mentioned that $\sigma(\sigma-{\rm{i}}t)^{-1}$ is the characteristic function of the exponential distribution with parameter $\sigma>0$ defined by $\mu (B) := \sigma \int_{B \cap (0,\infty)} e^{-\sigma y}dy$, where $B$ is a Borel set on ${\mathbb{R}}$ (see for instance \cite[Example 2.14]{S99}). As an application to analytic number theory, Nakamura \cite{NaPa} showed that for any $C \in {\mathbb{C}}$ satisfying $|C| > 10$ and $-19/2 \le \Re (C) \le 17/2$, the function $\zeta(s) + Cs$ does not vanish in the half-plane $1/18 < \sigma$. 

In the present paper, we give Hurwitz-Lerch zeta distributions with $0 < \sigma \ne 1$ by using not $\sigma(\sigma - {\rm{i}}t)^{-1}$ but the normalized Gamma function $\Gamma (\sigma + {\rm{i}}t)/\Gamma (\sigma)$ (see Proposition \ref{th:1} and Theorem \ref{th:2}). Recall that $g_{\sigma,a}(t)$ defined by (\ref{eq:defhuzd}) is not a characteristic function when $1/2 \le \sigma <1$. Next we define Hurwitz-Lerch type of Euler-Zagier double zeta distributions not only in the region of absolute convergence $\Re (s_1) >0$, $\Re (s_2) >1$ and $\Re (s_1+s_2)>2$ but also outside of the region above, for example, $\Re (s_1)>0$, $\Re (s_2) >1$ and $1< \Re (s_1+s_2) <2$ in Theorems \ref{th:dz1} and \ref{th:dz2}. It should be emphasized that this is the first two dimensional zeta distribution which can be continued to the outside of the region of absolute convergence. 

\section{Main results}

\subsection{Hurwitz-Lerch zeta distributions}

For $\sigma >0$, let 
\begin{equation}\label{eq:def1}
F_{\sigma, a, z} (t) := \frac{f_{\sigma, a, z} (t)}{f_{\sigma, a, z} (0)}, \qquad 
f_{\sigma, a, z} (t) := \Gamma (\sigma + {\rm{i}}t) \Phi (\sigma + {\rm{i}}t, a,z).
\end{equation}
Then we have the following statements.
\begin{proposition}\label{th:1}
The function $F_{\sigma, a, z} (t)$ is a characteristic function of a probability measure for all $\sigma >1$ if and only if $0 \ne z \in [-1,1]$. Moreover, the associated probability measure is absolutely continuous with density function $P_{\sigma, a,z} (y)$ is given as follows:
\begin{align}\label{eq:pdef1}
P_{\sigma, a,z} (y) := 
\frac{e^{y\sigma} \exp ((1-a)e^{y})}{f_{\sigma, a,z} (0) (\exp (e^{y})-z)}, \qquad y \in {\mathbb{R}}.
\end{align}
\end{proposition}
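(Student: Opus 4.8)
The plan is to establish the claimed density formula first, and then read off the equivalence with the condition $0\ne z\in[-1,1]$ from it. The starting point is the classical Mellin/Gamma integral representation
\[
\Gamma(s)(n+a)^{-s} = \int_0^\infty u^{s-1} e^{-(n+a)u}\,du, \qquad \Re(s)>0,
\]
valid for each $n\ge 0$. Summing against $z^n$ and interchanging sum and integral (justified by absolute convergence when $\sigma>1$ and $|z|\le 1$), one gets
\[
f_{\sigma,a,z}(t)=\Gamma(s)\Phi(s,a,z)=\int_0^\infty u^{s-1} e^{-au}\sum_{n=0}^\infty (ze^{-u})^n\,du
=\int_0^\infty \frac{u^{s-1}e^{-au}}{1-ze^{-u}}\,du,
\]
with $s=\sigma+\mathrm{i}t$; here the geometric series converges since $|ze^{-u}|<1$ for $u>0$. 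The next step is the substitution $u=e^{y}$, $du=e^{y}dy$, $y\in\R$, which turns the half-line integral into an integral over all of $\R$ and produces $u^{s-1}=e^{(s-1)y}=e^{(\sigma-1)y}e^{\mathrm{i}ty}$, so that
\[
f_{\sigma,a,z}(t)=\int_{\R} e^{\mathrm{i}ty}\,\frac{e^{\sigma y}\,e^{-a e^{y}}}{1-ze^{-e^{y}}}\,dy
=\int_{\R} e^{\mathrm{i}ty}\,\frac{e^{\sigma y}\exp((1-a)e^{y})}{\exp(e^{y})-z}\,dy.
\]
Dividing by $f_{\sigma,a,z}(0)$ gives exactly $F_{\sigma,a,z}(t)=\int_\R e^{\mathrm{i}ty}P_{\sigma,a,z}(y)\,dy$ with $P_{\sigma,a,z}$ as in \eqref{eq:pdef1}. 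This is the Fourier-transform identity we want.

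It remains to determine when $P_{\sigma,a,z}$ is a genuine probability density, i.e.\ when it is real, nonnegative, and integrable with total mass $1$. Integrability and total mass $1$ are automatic whenever the integrand is nonnegative: the substitution computation above run backwards shows $\int_\R P_{\sigma,a,z}(y)\,dy = f_{\sigma,a,z}(0)/f_{\sigma,a,z}(0)=1$, provided $f_{\sigma,a,z}(0)=\Gamma(\sigma)\Phi(\sigma,a,z)$ is finite and nonzero, which holds for $\sigma>1$ and $0\ne z$ with $|z|\le 1$ (for $z\in[-1,1]\setminus\{0\}$ one checks $\Phi(\sigma,a,z)>0$ when $z\in(0,1]$ and, for $z\in[-1,0)$, that $\Phi(\sigma,a,z)$ is still a convergent alternating-type sum with positive value — in any case nonzero, and indeed positive, so $f_{\sigma,a,z}(0)>0$). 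For nonnegativity of the integrand, note the numerator $e^{\sigma y}\exp((1-a)e^{y})$ is strictly positive for every $y$ (since $0<a\le 1$), so the sign of $P_{\sigma,a,z}(y)$ is that of $1/(\exp(e^{y})-z)$. Since $\exp(e^{y})$ ranges over $(1,\infty)$ as $y$ ranges over $\R$, the denominator $\exp(e^{y})-z$ is nonzero for all $y\in\R$ and has constant (positive) sign precisely when $z\le 1$; more importantly, for $P$ to be \emph{real} we need $z$ real, and given $z$ real the only way to avoid a sign change — and a pole — in the denominator is $z\le 1$. Combined with $z\ne 0$ (needed for $f_{\sigma,a,z}(0)\ne 0$ and for $\Phi$ to be defined per Definition \ref{def:ler}) and with $|z|\le 1$, this pins down $0\ne z\in[-1,1]$.

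For the converse direction — that $F_{\sigma,a,z}$ fails to be a characteristic function when $z\notin[-1,1]$ — I would argue as follows. If $z$ is non-real, then $P_{\sigma,a,z}(y)$ as defined is not real-valued, so it cannot be a probability density; but one still has to rule out that some \emph{other} measure has $F_{\sigma,a,z}$ as its characteristic function. This is handled by the uniqueness of the Fourier transform together with the observation that $F_{\sigma,a,z}$, being (a ratio of) the function displayed above, is the Fourier transform of the $L^1$ function $P_{\sigma,a,z}$; a characteristic function that is the Fourier transform of an $L^1$ function determines that function as its density, and a probability density must be nonnegative a.e., which $P_{\sigma,a,z}$ is not unless $z\in(-\infty,1]$. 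If $z>1$ (real), then $\exp(e^{y})-z$ vanishes at $y_0=\log\log z$ and changes sign there, so $P_{\sigma,a,z}$ is not integrable near $y_0$ and changes sign — again impossible for a density. If $z<-1$ (real), then $0\ne z\notin[-1,1]$ although the denominator is still of constant sign; here the obstruction is instead that $\Phi(\sigma,a,z)$ diverges (the defining series $\sum z^n(n+a)^{-\sigma}$ requires $|z|\le 1$), so $f_{\sigma,a,z}(0)$ — hence $F_{\sigma,a,z}$ — is not even defined. The main technical obstacle I anticipate is not any single estimate but getting the converse bookkeeping airtight: one must be careful to separate the cases $|z|>1$ (where the zeta-side object is undefined) from $z\in(1,\infty)$ with $|z|\le 1$ impossible, i.e.\ the only genuinely "interesting" failure is $z\notin\R$ with $|z|\le 1$, and there the argument really does rest on Fourier uniqueness plus the sign of the explicit density. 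Justifying the sum–integral interchange and the change of variables is routine given $\sigma>1$.
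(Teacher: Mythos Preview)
Your proof is correct and follows the same route as the paper's: the integral representation (which the paper quotes from Erd\'elyi rather than deriving), the substitution $x=e^y$ to exhibit $F_{\sigma,a,z}$ as a Fourier transform, positivity of the density for $z\in[-1,1]\setminus\{0\}$, and Fourier uniqueness of finite complex measures for the converse when $z\notin\R$ (the only case actually in play given the standing hypothesis $0<|z|\le 1$, so your discussion of $z>1$ and $z<-1$ is superfluous). One small correction: $z=0$ does \emph{not} force $f_{\sigma,a,z}(0)=0$ --- it gives $\Gamma(\sigma)a^{-\sigma}\ne 0$ --- so the exclusion $z\ne 0$ comes solely from the domain in Definition~\ref{def:ler}; also, the paper makes the positivity $\Phi(\sigma,a,z)>0$ fully explicit via the pairing $z^{2n}(2n+a)^{-\sigma}+z^{2n+1}(2n+1+a)^{-\sigma}>0$, which you should spell out rather than leave as ``alternating-type sum with positive value.''
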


\begin{theorem}\label{th:2}We have the following.\\
$(1)$. Let $z=1$. Then the function $F_{\sigma, a, 1} (t)$ is a characteristic function of a probability measure for all $0 < \sigma <1$ if and only if $a \ge 1/2$. Furthermore, the associated probability measure is absolutely continuous with density functions $P_{\sigma, a, 1} (y)$ is given by
\begin{equation}\label{eq:pdef2}
P_{\sigma, a, 1} (y) := \frac{e^{y\sigma}}{f_{\sigma, a, 1} (0)}
\biggl( \frac{\exp ( (1-a) e^{y})}{(\exp (e^{y})-1)} - \frac{1}{e^{y}} \biggr),  \qquad y \in {\mathbb{R}}.
\end{equation}
$(2)$. Let $z \ne 1$. Then the function $F_{\sigma, a, z} (t)$ is a characteristic function of a probability measure for all $\sigma >0$ if and only if $0 \ne z \in [-1,1)$. The associated probability measure is absolutely continuous with density function $P_{\sigma, a,z} (y)$ is written as (\ref{eq:pdef1}).
\end{theorem}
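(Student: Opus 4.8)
The plan is to reduce both parts of Theorem \ref{th:2} to an elementary question about the sign of a single real function, by first writing $f_{\sigma,a,z}$ as an absolutely convergent Fourier integral. From $\Gamma(s)(n+a)^{-s}=\int_{0}^{\infty}u^{s-1}e^{-(n+a)u}\,du$, summing over $n\ge 0$ and interchanging sum and integral (legitimate for $\sigma>1$ by Tonelli, since $\sum_{n\ge 0}|z|^{n}\Gamma(\sigma)(n+a)^{-\sigma}=\Gamma(\sigma)\Phi(\sigma,a,|z|)<\infty$), one gets $f_{\sigma,a,z}(s)=\int_{0}^{\infty}u^{s-1}e^{-au}(1-ze^{-u})^{-1}\,du$ for $\sigma>1$. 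When $z\ne 1$ this integral converges and is holomorphic on $\{\Re s>0\}$ (the factor $1-ze^{-u}$ does not vanish for $u>0$ and, since $z\ne 1$, satisfies $\inf_{u>0}|1-ze^{-u}|>0$), while $\Phi(\cdot,a,z)$ is entire there, so the identity persists for all $\sigma>0$. When $z=1$ and $0<\sigma<1$, splitting the integral at $u=1$, subtracting the polar part $u^{-1}$ near $0$, and using $(s-1)^{-1}=\int_{1}^{\infty}u^{s-1}(-u^{-1})\,du$ (valid for $\Re s<1$) gives $f_{\sigma,a,1}(s)=\int_{0}^{\infty}u^{s-1}\bigl(e^{(1-a)u}(e^{u}-1)^{-1}-u^{-1}\bigr)\,du$, now absolutely convergent. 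In both cases the substitution $u=e^{y}$ converts this to $f_{\sigma,a,z}(t)=\int_{\R}e^{{\rm i}ty}Q_{\sigma,a,z}(y)\,dy$, where $Q_{\sigma,a,z}=f_{\sigma,a,z}(0)\,P_{\sigma,a,z}$ with $P_{\sigma,a,z}$ exactly as in \eqref{eq:pdef1} (case $z\ne1$), resp.\ \eqref{eq:pdef2} (case $z=1$); in particular $F_{\sigma,a,z}(t)=\int_{\R}e^{{\rm i}ty}P_{\sigma,a,z}(y)\,dy$ whenever $f_{\sigma,a,z}(0)\ne 0$. A short estimate shows $Q_{\sigma,a,z}\in L^{1}(\R)$: it is $O(e^{\sigma y})$ as $y\to-\infty$, and as $y\to+\infty$ it decays like $e^{\sigma y}e^{-a e^{y}}$ when $z\ne1$ and like $e^{(\sigma-1)y}$ when $z=1$ --- which is exactly why the restriction $\sigma<1$ is forced in part $(1)$. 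Since $\int_{\R}Q_{\sigma,a,z}=f_{\sigma,a,z}(0)$, we have $\int_{\R}P_{\sigma,a,z}=1$ whenever $f_{\sigma,a,z}(0)\ne 0$, and then $F_{\sigma,a,z}$ is a characteristic function \emph{if and only if} $P_{\sigma,a,z}\ge 0$ a.e.: the ``if'' is immediate, and for the ``only if'', if $F_{\sigma,a,z}=\widehat{\mu}$ for a probability measure $\mu$, then the finite complex measure $P_{\sigma,a,z}\,dy$ has the same Fourier transform, so $\mu=P_{\sigma,a,z}\,dy$ by uniqueness, forcing $P_{\sigma,a,z}\ge 0$ a.e. Hence everything reduces to the sign of $Q_{\sigma,a,z}$ on $\R$.

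For part $(2)$, $Q_{\sigma,a,z}(y)=e^{\sigma y}\exp((1-a)e^{y})\bigl(\exp(e^{y})-z\bigr)^{-1}$. If $z$ is real with $z<1$, then $\exp(e^{y})>1>z$ for every $y$, so $Q_{\sigma,a,z}>0$ everywhere; hence $f_{\sigma,a,z}(0)>0$ and $P_{\sigma,a,z}$ is a genuine probability density, which proves the ``if'' direction for every $\sigma>0$. If $z\notin\R$, then $\exp(e^{y})-z$ has constant nonzero imaginary part, so $P_{\sigma,a,z}=Q_{\sigma,a,z}/f_{\sigma,a,z}(0)$ is non-real for almost every $y$ (assuming $f_{\sigma,a,z}(0)\ne0$, otherwise $F_{\sigma,a,z}$ is undefined) and cannot agree a.e.\ with a nonnegative density; by the uniqueness argument above $F_{\sigma,a,z}$ is then not a characteristic function. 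Since the standing hypotheses $0<|z|\le 1$, $z\ne 1$ allow only $z\in[-1,1)\setminus\{0\}$ (real) or $z\notin\R$, this gives the ``only if'' direction.

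For part $(1)$, writing $u=e^{y}$, the sign of $Q_{\sigma,a,1}(y)$ equals that of
\[
\phi(u):=ue^{(1-a)u}-e^{u}+1=e^{u}\bigl(ue^{-au}+e^{-u}-1\bigr),\qquad u>0 .
\]
The Taylor expansion of $\phi$ at $0$ begins with $(\tfrac12-a)u^{2}$, while $\phi(u)\to-\infty$ as $u\to\infty$ because $1-a<1$ makes $ue^{(1-a)u}=o(e^{u})$. Thus if $a<\tfrac12$, $\phi$ is positive near $0$ and negative near $\infty$: $Q_{\sigma,a,1}$ changes sign, hence $F_{\sigma,a,1}$ is not a characteristic function --- this is the ``only if'' direction (the case $a>1$ being ruled out by $0<a\le 1$). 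For the ``if'' direction, assuming $a\ge\tfrac12$, I would show $\psi(u):=ue^{-au}+e^{-u}$ satisfies $\psi(u)<\psi(0)=1$ on $(0,\infty)$, so that $\phi<0$ there, $Q_{\sigma,a,1}<0$ everywhere, $f_{\sigma,a,1}(0)<0$, and $P_{\sigma,a,1}=Q_{\sigma,a,1}/f_{\sigma,a,1}(0)$ is a probability density. The inequality $\psi<1$ follows from $\psi'<0$ on $(0,\infty)$: since $\psi'(u)=e^{-au}(1-au)-e^{-u}$, it is equivalent to $1-au<e^{-(1-a)u}$, which is clear for $u\ge 1/a$ (left side $\le 0$) and for $0<u<1/a$ follows from $1-au\le 1-(1-a)u\le e^{-(1-a)u}$, where the first inequality uses $a\ge\tfrac12$, the second uses $e^{x}\ge 1+x$, and at least one of the two is strict for $u>0$.

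I expect the main obstacle to be this last inequality in part $(1)$ --- that $ue^{(1-a)u}+1\le e^{u}$ holds for all $u>0$ exactly when $a\ge 1/2$, with a genuine sign change when $a<1/2$; the reformulation via monotonicity of $u\mapsto ue^{-au}+e^{-u}$ and the case split at $u=1/a$ is the device that makes it work. One must also be careful that ``$F_{\sigma,a,z}$ is a characteristic function'' is genuinely equivalent to ``$P_{\sigma,a,z}\ge 0$ a.e.'', which is what the Fourier-uniqueness step in the first paragraph supplies.
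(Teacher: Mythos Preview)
Your proof is correct and follows the same overall strategy as the paper: write $F_{\sigma,a,z}$ as an absolutely convergent Fourier integral via $x=e^{y}$, reduce the question to the sign of the integrand, and invoke Fourier uniqueness of finite (complex) measures for the ``only if'' direction. The structure of both parts matches the paper's proof exactly.

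The difference is one of packaging. The paper imports the three key inputs as black-box lemmas from \cite{NaHuLe}: the regularized integral representation $\Gamma(s)\zeta(s,a)=\int_{0}^{\infty}H(a,x)x^{s-1}\,dx$ for $0<\sigma<1$ (Lemma~\ref{lem:12.2ac}), the fact that $H(a,x)<0$ on $(0,\infty)$ iff $a\ge 1/2$ (Lemma~\ref{lem:negdefi}), and the negativity of $\zeta(\sigma,a)$ in that range (Lemma~\ref{lem:huzero28}). You instead derive all of these from scratch: the continuation of the integral via subtracting $u^{-1}$ and rewriting $(s-1)^{-1}$ as $-\int_{1}^{\infty}u^{s-2}\,du$, and---this is where your route is genuinely more explicit---the elementary proof that $ue^{(1-a)u}+1<e^{u}$ for all $u>0$ exactly when $a\ge 1/2$, by showing $\psi(u)=ue^{-au}+e^{-u}$ is strictly decreasing via the two-line estimate $1-au\le 1-(1-a)u\le e^{-(1-a)u}$. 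The sign of $f_{\sigma,a,1}(0)$ then falls out for free from $f_{\sigma,a,1}(0)=\int_{\R}Q_{\sigma,a,1}$ rather than requiring a separate lemma. Your version is therefore self-contained at the cost of a few extra lines, while the paper's is shorter but leans on an external reference; mathematically they are the same argument.
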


It is widely know that the absolute value of characteristic function is not greater than 1. Therefore, we immediately obtain the following inequality (see also (\ref{inep:phiposiab}) and Lemmas \ref{lem:huzero28} and \ref{lem:phi1posi}).
\begin{corollary}\label{cor1}
Let $a\ge 1/2$. Then for all $t \in {\mathbb{R}}$ and $0 < \sigma \ne 1$, one has
$$
\bigl|\Gamma (\sigma + {\rm{i}}t) \zeta (\sigma + {\rm{i}}t, a)\bigr| \le 
\Gamma (\sigma) \bigl|\zeta (\sigma, a) \bigr|. 
$$
If $0 \ne z \in [-1,1)$, for all $t \in {\mathbb{R}}$ and $\sigma >0$, it holds that
$$
\bigl|\Gamma (\sigma + {\rm{i}}t) \Phi (\sigma + {\rm{i}}t, a,z)\bigr| \le \Gamma (\sigma) \Phi (\sigma, a,z) . 
$$
\end{corollary}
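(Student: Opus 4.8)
The corollary follows almost immediately from the preceding results once one recalls the standard fact that $|\widehat{\mu}(z)| \le 1$ for any characteristic function. The plan is to invoke Proposition \ref{th:1} and Theorem \ref{th:2} directly: since $F_{\sigma,a,z}(t)$ is a characteristic function under the stated hypotheses, its modulus is bounded by $1$, and unwinding the definition \eqref{eq:def1} of $F_{\sigma,a,z}$ converts this bound into the claimed inequality.

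For the first inequality, I would set $z = 1$ and note that $\Phi(\sigma+{\rm i}t, a, 1) = \zeta(\sigma+{\rm i}t, a)$. When $a \ge 1/2$, Theorem \ref{th:2}(1) says $F_{\sigma, a, 1}$ is a characteristic function for all $0 < \sigma < 1$, and Proposition \ref{th:1} covers all $\sigma > 1$; together these handle every $0 < \sigma \ne 1$. From $|F_{\sigma, a, 1}(t)| \le 1$ and $F_{\sigma,a,1}(t) = \Gamma(\sigma+{\rm i}t)\zeta(\sigma+{\rm i}t,a) / f_{\sigma,a,1}(0)$ with $f_{\sigma, a, 1}(0) = \Gamma(\sigma)\zeta(\sigma, a)$, one obtains $|\Gamma(\sigma+{\rm i}t)\zeta(\sigma+{\rm i}t,a)| \le \Gamma(\sigma)|\zeta(\sigma,a)|$. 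The only mild care needed is that $f_{\sigma,a,1}(0)$ could in principle be negative or that one should write $|\,\cdot\,|$ around it; since the right-hand side of the asserted inequality already carries an absolute value on $\zeta(\sigma, a)$ and $\Gamma(\sigma) > 0$ for $\sigma > 0$, this is harmless. The parenthetical references to \eqref{inep:phiposiab} and Lemmas \ref{lem:huzero28} and \ref{lem:phi1posi} presumably supply the positivity of the relevant quantities, which I would cite to keep the absolute-value bookkeeping clean.

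For the second inequality, I would take $0 \ne z \in [-1,1)$ and apply Theorem \ref{th:2}(2), which gives that $F_{\sigma,a,z}$ is a characteristic function for all $\sigma > 0$. Then $|F_{\sigma,a,z}(t)| \le 1$ together with $f_{\sigma,a,z}(0) = \Gamma(\sigma)\Phi(\sigma,a,z)$ yields $|\Gamma(\sigma+{\rm i}t)\Phi(\sigma+{\rm i}t,a,z)| \le \Gamma(\sigma)\Phi(\sigma,a,z)$; here the right side needs no absolute value because $\Phi(\sigma,a,z) = \sum_{n\ge 0} z^n (n+a)^{-\sigma}$ is, for real $z \in [-1,1)$ and $\sigma > 0$, a positive real number — this is exactly the content flagged by \eqref{inep:phiposiab}, which I would reference rather than reprove.

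There is essentially no obstacle here: the corollary is a formal consequence of the two main results plus the universal bound on characteristic functions. The only point requiring a sentence of justification is the sign/positivity of the normalizing constants $f_{\sigma,a,z}(0)$, so that the inequalities can be stated without an absolute value on the right-hand side; I would dispatch this by pointing to the positivity statements already established in the paper. Thus the proof is two short lines, one for each inequality, each of the form ``characteristic function $\Rightarrow$ modulus $\le 1$ $\Rightarrow$ rearrange''.
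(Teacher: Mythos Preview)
Your proposal is correct and matches the paper's own argument essentially verbatim: the paper states the corollary as an immediate consequence of Proposition~\ref{th:1} and Theorem~\ref{th:2} via the bound $|\widehat{\mu}|\le 1$, with the parenthetical references to \eqref{inep:phiposiab} and Lemmas~\ref{lem:huzero28} and~\ref{lem:phi1posi} handling exactly the sign of $f_{\sigma,a,z}(0)$ that you flag. There is nothing to add.
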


\subsection{Hurwitz-Lerch type of Euler-Zagier double zeta distributions}
For $\sigma_1 >0$ and $\sigma_2 >0$, let 
\begin{equation}\label{eq:defdzd1}
F_{\vec{\sigma}, a, \vec{z}} (\vec{t}) := 
\frac{f_{\vec{\sigma}, a, \vec{z}} (\vec{t})}{f_{\vec{\sigma}, a, \vec{z}} (\vec{0})}, \quad
f_{\vec{\sigma}, a, \vec{z}} (\vec{t}) := \Gamma (\sigma_1 + {\rm{i}}t_1) \Gamma (\sigma_2 + {\rm{i}}t_2) 
\Phi_2 (\sigma_1 + {\rm{i}}t_1, \sigma_2 + {\rm{i}}t_2, a, z_1,z_2).
\end{equation}
Note that the series expression of $\Phi_2 (s_1, s_2, a, z_1, z_2)$ converges absolutely when $\Re (s_1) >0$, $\Re (s_2) >1$ and $\Re (s_1+s_2)>2$ and the function $\Phi_2 (s_1, s_2, a, z_1, z_2)$ is continued analytically to the outside of the region of absolute convergent (see \cite[Lemma 2.4]{NaHuLe} and \cite[Lemma 2.8]{NaHuLe}, respectively). Then we have the following statements.
\begin{theorem}\label{th:dz1}
The function $F_{\vec{\sigma}, a, \vec{z}} (\vec{t})$ is a characteristic function of a probability measure for all $\sigma_1 >0$, $\sigma_2 >1$ and $\sigma_1+\sigma_2>2$ if and only if $0 \ne z_1,z_2 \in [-1,1]$. Moreover, the associated probability measure is absolutely continuous with density function $P_{\vec{\sigma}, a, \vec{z}} (\eta,\theta)$ is written by 
\begin{align}\label{eq:qdefezh1}
P_{\vec{\sigma}, a, \vec{z}} (\eta,\theta) = 
\frac{e^{\sigma_1\eta} e^{\sigma_2\theta} \exp((1-a)(e^\eta + e^\theta))}{f_{\vec{\sigma},a,\vec{z}} (\vec{0}) 
(\exp(e^\theta)-z_2) (\exp(e^\eta + e^\theta)-z_1)}, \qquad \eta,\theta \in {\mathbb{R}}.
\end{align}
\end{theorem}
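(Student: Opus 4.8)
The plan is to mimic the one-dimensional argument behind Proposition~\ref{th:1} and Theorem~\ref{th:2}, performing the analysis variable by variable. First I would establish the integral representation underlying \eqref{eq:qdefezh1}. Using the absolutely convergent series \eqref{eq:defezhdz} together with the classical Gamma integral $\Gamma(s)(m+a)^{-s} = \int_0^\infty u^{s-1} e^{-(m+a)u}\,du$ applied to the inner sum in the variable $s_2 = \sigma_2 + {\rm i}t_2$ and to the resulting combined exponent in the variable $s_1 = \sigma_1 + {\rm i}t_1$, I would write
\[
f_{\vec{\sigma}, a, \vec{z}}(\vec{t}) = \int_0^\infty \int_0^\infty u^{s_1-1} v^{s_2-1}\,
\frac{e^{-a(u+v)}\, e^{-av}\,\cdots}{(\text{something})}\,du\,dv,
\]
and then substitute $u = e^\eta + e^\theta$, $v = e^\theta$ (the substitution dictated by the form of \eqref{eq:qdefezh1}), which is a diffeomorphism of $(0,\infty)^2$ onto $\{(u,v): u > v > 0\}$ with Jacobian producing the factors $e^\eta e^\theta$. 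Summing the two geometric series in $z_1, z_2$ (legitimate since $|z_1|,|z_2|\le 1$ and $u>v>0$ keeps $\exp(e^\theta) - z_2$ and $\exp(e^\eta+e^\theta) - z_1$ bounded away from $0$ except possibly at $z_j=1$, $\theta,\eta\to 0$, which is integrable) yields exactly $f_{\vec{\sigma},a,\vec{z}}(\vec t) = \int\!\!\int e^{{\rm i}t_1\eta + {\rm i}t_2\theta}\, e^{\sigma_1\eta+\sigma_2\theta}\exp((1-a)(e^\eta+e^\theta))\big/\big((\exp(e^\theta)-z_2)(\exp(e^\eta+e^\theta)-z_1)\big)\,d\eta\,d\theta$. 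Dividing by $f_{\vec\sigma,a,\vec z}(\vec 0)$ identifies $F_{\vec\sigma,a,\vec z}$ as the characteristic function of the measure with the stated density, provided that density is a probability density.

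For the \emph{if} direction it then remains to check that $P_{\vec\sigma,a,\vec z}(\eta,\theta)$ is nonnegative and integrable with total mass $1$. Nonnegativity: the numerator is manifestly positive for $\eta,\theta\in{\mathbb R}$; the denominator factors $\exp(e^\theta)-z_2$ and $\exp(e^\eta+e^\theta)-z_1$ are positive since $\exp(e^\theta)>1\ge z_2$ and $\exp(e^\eta+e^\theta)>1\ge z_1$ (here $0\ne z_j\in[-1,1]$ is used, and also $f_{\vec\sigma,a,\vec z}(\vec 0)>0$, which follows because the series \eqref{eq:defezhdz} with $s_j=\sigma_j>0$ real has all positive terms when $z_j>0$, and more generally one argues via the same integral representation with $\vec t=\vec 0$). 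Integrability: near $\eta,\theta\to +\infty$ the double exponential in the denominator dominates; near $\eta\to-\infty$ or $\theta\to-\infty$ the factors $e^{\sigma_1\eta}$, $e^{\sigma_2\theta}$ decay since $\sigma_1,\sigma_2>0$ and the denominator stays bounded below (when $z_j=1$ one has $\exp(e^\theta)-1\sim e^\theta$, and the quotient $e^{\sigma_2\theta}/e^\theta = e^{(\sigma_2-1)\theta}$ is integrable at $-\infty$ precisely because $\sigma_2>1$; similarly $\exp(e^\eta+e^\theta)-1 \sim e^\eta+e^\theta$ contributes a factor whose interplay with $e^{\sigma_1\eta}e^{\sigma_2\theta}$ is controlled by $\sigma_1>0$, $\sigma_1+\sigma_2>2$). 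Total mass $1$ is automatic once the integral representation of $f_{\vec\sigma,a,\vec z}(\vec 0)$ is established and matched with $\int\!\!\int P_{\vec\sigma,a,\vec z}=1$.

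For the \emph{only if} direction, suppose $F_{\vec\sigma,a,\vec z}$ is a characteristic function; I would extract the constraint $z_1,z_2\in[-1,1]$ by restricting to coordinate slices. Fixing $t_2=0$ (respectively $t_1=0$) reduces $F_{\vec\sigma,a,\vec z}$ to a one-variable characteristic function whose normalized form, after the same Gamma-integral manipulation, is comparable to the Hurwitz--Lerch characteristic function of Proposition~\ref{th:1}; the argument there (using \eqref{ieq:hur0}-type lower bounds, i.e.\ that $|\Phi|$ can exceed its value at the real point when $z\notin[-1,1]$, together with $|\wh\mu|\le 1$) forces $z_j\in[-1,1]$. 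I expect the main obstacle to be the careful bookkeeping in the \emph{only if} direction — namely showing that a bad $z_j$ outside $[-1,1]$ actually produces a violation $|F_{\vec\sigma,a,\vec z}(\vec t)|>1$ somewhere, rather than the violation being masked by the other variable — and, on the analytic side, justifying the exchange of summation and the double integral (Fubini) uniformly on the full parameter range $\sigma_1>0$, $\sigma_2>1$, $\sigma_1+\sigma_2>2$ right up to the boundary cases $z_j=1$, where the integrand's denominator degenerates at the origin and one needs the precise decay exponents above to keep everything absolutely convergent.
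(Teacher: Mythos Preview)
Your ``if'' direction is essentially the paper's argument: derive the double integral representation (this is the paper's Lemma~\ref{lem:ezhd1}), change variables to $(\eta,\theta)$, and check positivity and integrability. One minor correction: the substitution is simply $x=e^\eta$, $y=e^\theta$, a diffeomorphism $\mathbb{R}^2\to(0,\infty)^2$; there is no need for your $u=e^\eta+e^\theta$, $v=e^\theta$ with range $\{u>v>0\}$, which does not match the domain of the integral and would muddle the Jacobian. Positivity of $f_{\vec\sigma,a,\vec z}(\vec 0)$ is handled in the paper by a separate lemma (Lemma~\ref{lem:posi}) using the integral representation, just as you suggest.

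The genuine gap is in your ``only if'' direction. You propose to restrict to coordinate slices and invoke Proposition~\ref{th:1}, citing \eqref{ieq:hur0}-type bounds to force $|F|>1$ when $z_j\notin[-1,1]$. This is the wrong mechanism on two counts. First, inequality~\eqref{ieq:hur0} concerns $\zeta(\sigma+{\rm i}t,a)$ for real $\sigma\in[1/2,1)$ and has nothing to do with complex values of $z$; the ``only if'' part of Proposition~\ref{th:1} is \emph{not} proved via $|F|>1$. Second, restricting to $t_2=0$ gives the characteristic function of a marginal, namely $t_1\mapsto \Gamma(\sigma_1+{\rm i}t_1)\Phi_2(\sigma_1+{\rm i}t_1,\sigma_2,a,z_1,z_2)\big/\bigl(\Gamma(\sigma_1)\Phi_2(\sigma_1,\sigma_2,a,z_1,z_2)\bigr)$, which is not of Hurwitz--Lerch form, so Proposition~\ref{th:1} does not apply directly. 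The paper's actual argument---both here and in Proposition~\ref{th:1}---is Fourier uniqueness: since $0<|z_j|\le 1$, the condition $z_j\notin[-1,1]$ means $\Im z_j\ne 0$, so the factor $\exp(e^\theta)-z_2$ or $\exp(e^\eta+e^\theta)-z_1$ is never real; hence the would-be density \eqref{eq:qdefezh1} is genuinely complex-valued. The associated complex measure has finite total variation (bounded by the $z_1=z_2=1$ integral), and since the Fourier transform is injective on such measures, $F_{\vec\sigma,a,\vec z}$ cannot be the characteristic function of any probability measure. You should replace your slice argument with this uniqueness argument.
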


\begin{theorem}\label{th:dz2}We have the following.\\
$(1)$. Let $z_1=z_2=1$. Then the function $F_{\vec{\sigma}, a,\vec{1}} (\vec{t})$ is a characteristic function of a probability measure for all $0< \sigma_1 < 1$, $\sigma_2 >1$ and $1< \sigma_1+\sigma_2 <2$ if and only if $a \ge 1/2$. Furthermore, the associated probability measure is absolutely continuous with density function $P_{\vec{\sigma}, a, \vec{1}} (y)$ is given as follows:
\begin{align}\label{eq:qdef2}
P_{\vec{\sigma}, a, \vec{1}} (\eta,\theta) = 
\frac{e^{\sigma_1 \eta} e^{\sigma_2 \theta}}{f_{\vec{\sigma}, a, \vec{1}} (\vec{0})} \left(
\frac{H(a,e^\eta+e^\theta)}{\exp(e^\theta)-1} + \frac{H(1,e^\theta)}{e^\eta + e^\theta} \right), 
\qquad \eta,\theta \in {\mathbb{R}},
\end{align}
where $H(a,x)$ is defined 
\begin{equation}
\label{eq:defHax}
H(a,x) := \frac{e^{(1-a)x}}{e^x-1} - \frac{1}{x} =  \frac{xe^{(1-a)x} - e^x +1}{x(e^x-1)}, \qquad x>0.
\end{equation}
$(2)$. Let $z_1=1$ and $z_2 \ne 1$. Then the function $F_{\vec{\sigma}, a,\vec{z}} (\vec{t})$ is a characteristic function of a probability measure for all $\sigma_1 > 1$ and $\sigma_2 >0$ if and only if $0 \ne z_2 \in [-1,1)$. \\
$(3)$. Let $z_1 \ne 1$ and $z_2=1$. Then the function $F_{\vec{\sigma}, a,\vec{z}} (\vec{t})$ is a characteristic function of a probability measure for all $\sigma_1 > 0$ and $\sigma_2 >1$ if and only if $0 \ne z_1 \in [-1,1)$. \\
$(4)$. Let $z_1 \ne 1$ and $z_2 \ne 1$. Then the function $F_{\vec{\sigma}, a,\vec{z}} (\vec{t})$ is a characteristic function of a probability measure for all $\sigma_1 > 0$ and $\sigma_2 > 0$ if and only if $0 \ne z_1,z_2 \in [-1,1)$. \\
For all cases (2), (3) and (4), the associated probability measure is absolutely continuous with density function $P_{\vec{\sigma}, a, \vec{z}} (\eta,\theta)$ is expressed as (\ref{eq:qdefezh1}). 
\end{theorem}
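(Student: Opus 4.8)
The plan is to reduce Theorem~\ref{th:dz2} to the one–dimensional results (Proposition~\ref{th:1} and Theorem~\ref{th:2}) by exploiting the iterated structure of $\Phi_2$, together with the Mellin–Barnes / integral representation that already underlies the proof of Theorem~\ref{th:dz1}. The key observation is that, for $\Re(s_2)>1$,
\[
\Gamma(s_2)\sum_{n=1}^\infty \frac{z_2^{n-1}}{(m+n+a)^{s_2}}
= \int_0^\infty u^{s_2-1} \frac{e^{-(m+a)u}}{e^u-z_2}\,du ,
\]
and likewise $\Gamma(s_1)(m+a)^{-s_1}=\int_0^\infty v^{s_1-1}e^{-(m+a)v}\,dv$, so that
\[
f_{\vec\sigma,a,\vec z}(\vec t)
= \int_0^\infty\!\!\int_0^\infty v^{s_1-1}u^{s_2-1}
\Bigl(\sum_{m=0}^\infty z_1^m e^{-(m+a)(u+v)}\Bigr)\frac{e^{-(m+a)u}\text{-part}}{e^u-z_2}\,du\,dv,
\]
which after summing the geometric series in $m$ and substituting $u=e^\theta$, $v+u=e^\eta+e^\theta$ (equivalently $v=e^\eta+e^\theta-e^\theta$, valid when $z_1\neq 1$ so the $m$–sum converges at $v=0$) yields exactly the claimed density $P_{\vec\sigma,a,\vec z}(\eta,\theta)$ of \eqref{eq:qdefezh1}. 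The ``if'' direction then amounts to checking that this density is nonnegative and integrates to $f_{\vec\sigma,a,\vec z}(\vec 0)$, the latter being automatic from Fubini once nonnegativity is in hand.

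For the nonnegativity in cases (2), (3), (4) I would argue factor by factor: $e^{\sigma_1\eta}e^{\sigma_2\theta}>0$ always; $\exp((1-a)(e^\eta+e^\theta))>0$ since $0<a\le 1$; and the two denominators $\exp(e^\theta)-z_2$ and $\exp(e^\eta+e^\theta)-z_1$ are positive precisely because the relevant $z_j$ lies in $[-1,1)$ (so $z_j<1\le e^{x}$ for $x>0$, with strict inequality ruling out a pole on the real axis — this is where $z_j=1$ must be excluded). Conversely, for the ``only if'' direction I would use that a characteristic function has modulus $\le 1$, combined with the Ramachandra–Sankaranarayanan–type lower bounds already invoked in the introduction (and the analogous statement for $\Phi_2$, which should follow from \cite{RaSa} applied in the $s_1$– or $s_2$–variable after fixing the other) to produce, for $z_j\notin[-1,1)$, a point $\vec t_0$ with $|F_{\vec\sigma,a,\vec z}(\vec t_0)|>1$; for complex $z_j$ with $|z_j|=1$ one picks the argument of $z_j$ to force a pole or a large value, and for $|z_j|>1$ the Dirichlet series already diverges on the real axis.

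Part (1), the case $z_1=z_2=1$ with $0<\sigma_1<1$, is the genuinely delicate one, exactly as $z=1$, $0<\sigma<1$ was in Theorem~\ref{th:2}(1). Here the substitution $v=e^\eta+e^\theta-e^\theta$ breaks down because the $m$–sum $\sum_m e^{-(m+a)v}$ has a pole at $v=0$, i.e. $\Phi_2(s_1,s_2,a,1,1)$ is not given by the naive integral; one must instead subtract the divergent part, which is precisely the origin of the function $H(a,x)$ in \eqref{eq:defHax}: $H(a,x)$ is the regularized version of $e^{(1-a)x}/(e^x-1)$ obtained by removing the $1/x$ singularity, and it is the key lemma (the analogue of Lemma~\ref{lem:phi1posi} cited in Corollary~\ref{cor1}) that $H(a,x)\ge 0$ for all $x>0$ if and only if $a\ge 1/2$. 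Granting that lemma, the density \eqref{eq:qdef2} is a sum of two manifestly nonnegative pieces — $H(a,e^\eta+e^\theta)/(\exp(e^\theta)-1)$ and $H(1,e^\theta)/(e^\eta+e^\theta)$, the second using $H(1,x)=(x-e^x+1)/(x(e^x-1))$ together with $H(1,x)\ge 0$ — so nonnegativity holds when $a\ge 1/2$, and when $a<1/2$ one uses $H(a,x)<0$ for small $x$ (hence $P_{\vec\sigma,a,\vec1}<0$ on a region), which contradicts being a density, or equivalently one reads off $|F_{\vec\sigma,a,\vec1}(\vec t)|>1$ somewhere. The main obstacle throughout is justifying the exchange of summation and the two Mellin integrals and, above all, the analytic–continuation bookkeeping in part (1): one must check that the ``subtract-the-pole'' integral actually computes the analytically continued $\Phi_2$ in the range $0<\sigma_1<1$, $\sigma_2>1$, $1<\sigma_1+\sigma_2<2$, for which I would lean on \cite[Lemma 2.8]{NaHuLe} and the integral representation of Lemma~\ref{lem:ezhd1} rather than redoing Komori's continuation from scratch. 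Once the representation is pinned down, verifying the normalization $\iint P\,d\eta\,d\theta = 1$ and reading off the ``only if'' directions are routine.
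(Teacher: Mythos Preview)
Your overall architecture matches the paper's, but there is a genuine sign error that breaks the argument for part (1). The key lemma (Lemma~\ref{lem:negdefi} in the paper) states that $H(a,x)<0$ for all $x>0$ if and only if $a\ge 1/2$, not $H(a,x)\ge 0$. In particular $H(1,x)=(x-e^x+1)/(x(e^x-1))<0$ for all $x>0$, since $e^x>1+x$. So the two summands in \eqref{eq:qdef2} are each \emph{negative}, not nonnegative, and the reason $P_{\vec\sigma,a,\vec 1}$ is a probability density is that the normalizer $f_{\vec\sigma,a,\vec 1}(\vec 0)=\Gamma(\sigma_1)\Gamma(\sigma_2)\zeta_2(\sigma_1,\sigma_2;a)$ is also negative in this range (Lemma~\ref{lem:HuEZnega}). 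Your ``manifestly nonnegative'' claim is therefore false, and your converse for $a<1/2$ has the sign of $H(a,x)$ for small $x$ backwards as well: when $a<1/2$ one has $H(a,x)\to 1/2-a>0$ as $x\to 0^+$. Moreover, for the ``only if'' in (1) it is not enough to look at $H(a,\cdot)$ alone; the paper needs the two-variable statement (Lemma~\ref{lem:negdefi2}) that $\mathcal H(a;x,y):=H(a,x+y)/(e^y-1)+H(1,y)/(x+y)$ changes sign when $a<1/2$, proved by a limit along $x=Ny$, $y\to 0^+$ with $N$ large.

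Your proposed ``only if'' mechanism for (2)--(4), via Ramachandra--Sankaranarayanan lower bounds to force $|F|>1$, is not what the paper does and is unlikely to go through: those $\Omega$-results are for $\zeta(s,a)$, not for $\Phi_2$ with general $z_j$, and there is no ready-made analogue. The paper's route is much simpler and you already have the ingredients for it: once you have written $F_{\vec\sigma,a,\vec z}(\vec t)=\iint e^{i(t_1\eta+t_2\theta)}\,\rho(\eta,\theta)\,d\eta\,d\theta$ with $\rho$ given by \eqref{eq:qdefezh1}, then for $z_j\notin[-1,1]$ the function $\rho$ fails to be real-valued (the factor $\exp(e^\theta)-z_2$ or $\exp(e^\eta+e^\theta)-z_1$ is non-real), so $\rho\,d\eta\,d\theta$ is a complex signed measure of finite total variation, and Fourier uniqueness for such measures says no probability measure can have the same transform. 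The same Fourier-uniqueness argument handles $0<a<1/2$ in part (1) once Lemma~\ref{lem:negdefi2} gives that $\rho$ takes both signs.
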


We obtain the following corollary from the fact that the absolute value of characteristic function is not greater than 1 (see also Lemmas \ref{lem:posi} and \ref{lem:HuEZnega}). 

\begin{corollary}\label{dzcor1}
Suppose $a \ge 1/2$, $\sigma_1 >0$, $\sigma_2 >1$ and $\sigma_1+\sigma_2>2$, or $0< \sigma_1 < 1$, $\sigma_2 >1$ and $1< \sigma_1+\sigma_2 <2$. Then for all $t_1,t_2 \in {\mathbb{R}}$, it holds that
$$
\bigl|\Gamma (\sigma_1 + {\rm{i}}t_1) \Gamma (\sigma_2 + {\rm{i}}t_2) 
\zeta_2 (\sigma_1 + {\rm{i}}t_1, \sigma_2 + {\rm{i}}t_2 \, ; a)\bigr| \le 
\Gamma (\sigma_1) \Gamma (\sigma_2) \bigl| \zeta_2 (\sigma_1, \sigma_2 \, ; a) \bigr| . 
$$
Moreover, for all $t_1,t_2 \in {\mathbb{R}}$, one has
$$
\bigl|\Gamma (\sigma_1 + {\rm{i}}t_1) \Gamma (\sigma_2 + {\rm{i}}t_2) 
\Phi_2 (\sigma_1 + {\rm{i}}t_1, \sigma_2 + {\rm{i}}t_2, a, z_1, z_2)\bigr| \le 
\Gamma (\sigma_1) \Gamma (\sigma_2) \Phi_2 (\sigma_1, \sigma_2, a, z_1, z_2)  
$$
$$
\mbox{for} \quad
\begin{cases}
\Re (s_1) >1 \mbox{ and } \Re (s_2) > 0, & z_1=1 \mbox{ and } 0 \ne z_2 \in [-1,1),\\
\Re (s_1) >0 \mbox{ and } \Re (s_2) > 1, & 0 \ne z_1 \in [-1,1) \mbox{ and } z_2 =1, \\
\Re (s_1) >0 \mbox{ and } \Re (s_2) > 0, & 0 \ne z_1, z_2 \in [-1,1).
\end{cases}
$$
\end{corollary}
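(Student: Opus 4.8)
The plan is to derive Corollary \ref{dzcor1} as an immediate consequence of Theorems \ref{th:dz1} and \ref{th:dz2} together with the elementary fact that a characteristic function is bounded in modulus by $1$. First I would recall that in each of the relevant parameter regimes, the quoted theorems assert that $F_{\vec{\sigma}, a, \vec{z}} (\vec{t})$ is a genuine characteristic function of a probability measure on ${\mathbb{R}}^2$; by \cite[Proposition 2.5]{S99} (already cited in the introduction) we then have $|F_{\vec{\sigma}, a, \vec{z}} (\vec{t})| \le 1$ for all $\vec{t} \in {\mathbb{R}}^2$. Unwinding the definition \eqref{eq:defdzd1}, this reads
\begin{equation*}
\bigl| \Gamma(\sigma_1 + {\rm{i}}t_1) \Gamma(\sigma_2 + {\rm{i}}t_2) \Phi_2(\sigma_1 + {\rm{i}}t_1, \sigma_2 + {\rm{i}}t_2, a, z_1, z_2) \bigr| \le \bigl| f_{\vec{\sigma}, a, \vec{z}}(\vec{0}) \bigr| = \Gamma(\sigma_1)\Gamma(\sigma_2) \bigl| \Phi_2(\sigma_1, \sigma_2, a, z_1, z_2) \bigr|,
\end{equation*}
since $\Gamma(\sigma_1), \Gamma(\sigma_2) > 0$ for $\sigma_1, \sigma_2 > 0$. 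This already gives the second displayed inequality in all three cases listed, once one matches the parameter constraints: case $z_1=1$, $z_2 \ne 1$ corresponds to part (2) of Theorem \ref{th:dz2}, case $z_1 \ne 1$, $z_2 = 1$ to part (3), and case $z_1, z_2 \ne 1$ to part (4).

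Next I would handle the first displayed inequality, the one for the Euler–Zagier double zeta function $\zeta_2(s_1,s_2\,;a) = \Phi_2(s_1,s_2,a,1,1)$. Here one must be slightly more careful because $z_1 = z_2 = 1$ is not covered by the "if and only if" conditions with the half-open interval; instead it is covered by Theorem \ref{th:dz1} (in the regime $\sigma_1 > 0$, $\sigma_2 > 1$, $\sigma_1 + \sigma_2 > 2$, which needs $a \ge 1/2$ only implicitly through the positivity of the density — actually the statement of Theorem \ref{th:dz1} places no lower bound on $a$, so I would double-check which hypothesis is actually invoked) and by part (1) of Theorem \ref{th:dz2} (in the regime $0 < \sigma_1 < 1$, $\sigma_2 > 1$, $1 < \sigma_1 + \sigma_2 < 2$, which requires $a \ge 1/2$). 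In both regimes the relevant $F$ is a characteristic function, so the same boundedness argument yields
\begin{equation*}
\bigl| \Gamma(\sigma_1 + {\rm{i}}t_1) \Gamma(\sigma_2 + {\rm{i}}t_2) \zeta_2(\sigma_1 + {\rm{i}}t_1, \sigma_2 + {\rm{i}}t_2\,;a) \bigr| \le \Gamma(\sigma_1)\Gamma(\sigma_2) \bigl| \zeta_2(\sigma_1,\sigma_2\,;a) \bigr|,
\end{equation*}
which is exactly the first claimed inequality, valid under the stated disjunction of hypotheses on $(a,\sigma_1,\sigma_2)$.

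There is essentially no obstacle of a substantive mathematical nature: the corollary is a packaging of the theorems plus a one-line general principle. The only points requiring genuine attention are bookkeeping ones. First, one should verify that the parameter region "$a \ge 1/2$, $\sigma_1 > 0$, $\sigma_2 > 1$, $\sigma_1 + \sigma_2 > 2$" in the corollary's hypothesis is consistent with what Theorem \ref{th:dz1} actually proves — since Theorem \ref{th:dz1} does not mention $a \ge 1/2$, the hypothesis in the corollary is merely not sharp there, which is harmless. Second, one should note that in the regime of Theorem \ref{th:dz2}(1) the value $f_{\vec{\sigma}, a, \vec{1}}(\vec{0}) = \Gamma(\sigma_1)\Gamma(\sigma_2)\zeta_2(\sigma_1,\sigma_2\,;a)$ could a priori be a signed real (the density \eqref{eq:qdef2} is nonnegative but $\zeta_2$ itself outside absolute convergence need not be positive), which is why the absolute value bars appear on the right-hand side; I would make this explicit so the reader sees why $|f_{\vec{\sigma},a,\vec{1}}(\vec{0})| = \Gamma(\sigma_1)\Gamma(\sigma_2)|\zeta_2(\sigma_1,\sigma_2\,;a)|$ rather than dropping the bars. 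The references to Lemmas \ref{lem:posi} and \ref{lem:HuEZnega} parenthetically offered in the corollary's preamble presumably supply exactly this sign information together with the non-vanishing of $f(\vec{0})$, so I would cite them at the point where positivity (or definite sign) of the normalizing constant is used, and otherwise the proof is complete.
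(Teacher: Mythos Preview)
Your proposal is correct and matches the paper's own approach: the corollary is stated immediately after the remark that it follows from the boundedness of characteristic functions by $1$, with a parenthetical pointer to Lemmas~\ref{lem:posi} and~\ref{lem:HuEZnega} for the sign of the normalizing constant. Your additional bookkeeping comments (on the harmless non-sharpness of the $a\ge 1/2$ hypothesis in the absolute-convergence regime and on why the absolute value is retained on the right-hand side) are accurate and simply make explicit what the paper leaves implicit.
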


\section{Proofs}

\subsection{Proofs of Proposition \ref{th:1} and Theorem \ref{th:2}}
In order to prove Proposition \ref{th:1}, we quote the following integral representation of $\Gamma (s) \Phi (s,a,z)$.
\begin{lemma}[{see \cite[p.~53, (3)]{Er}}]\label{lem:12.2}
When $z=1$ and $\Re (s) >1$, or $z \ne 1$ and $\Re (s) >0$, 
\begin{equation}\label{eq:gamhur0}
\Phi (s,a,z) = \frac{1}{\Gamma (s)} \int_0^\infty \frac{x^{s-1} e^{(1-a)x}}{e^x-z} dx. 
\end{equation}
\end{lemma}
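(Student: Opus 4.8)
The plan is to derive the representation from the elementary scaled Gamma integral together with a geometric series, treating the endpoints $x=0$ and $x=\infty$ separately. First I would record that, by the substitution $x\mapsto (n+a)x$ in $\Gamma(s)=\int_0^\infty x^{s-1}e^{-x}\,dx$, one has $\Gamma(s)(n+a)^{-s}=\int_0^\infty x^{s-1}e^{-(n+a)x}\,dx$ for $\Re(s)>0$ and every $n\ge 0$ (legitimate since $a>0$). Multiplying by $z^n$ and summing over $n\ge 0$, I would interchange sum and integral, the justification being Tonelli applied to the absolute values: $\sum_{n\ge 0}|z|^n\int_0^\infty x^{\sigma-1}e^{-(n+a)x}\,dx=\Gamma(\sigma)\sum_{n\ge 0}|z|^n(n+a)^{-\sigma}<\infty$ whenever $\sigma=\Re(s)>1$ and $|z|\le 1$. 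After the interchange the inner sum is the geometric series $\sum_{n\ge 0}(ze^{-x})^n=(1-ze^{-x})^{-1}$, convergent for every $x>0$ because $0<e^{-x}<1$ and $|z|\le 1$; multiplying numerator and denominator by $e^x$ converts $e^{-ax}(1-ze^{-x})^{-1}$ into $e^{(1-a)x}(e^x-z)^{-1}$, yielding $\Gamma(s)\Phi(s,a,z)=\int_0^\infty x^{s-1}e^{(1-a)x}(e^x-z)^{-1}\,dx$ for $\Re(s)>1$.

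Second, I would check that the right-hand integral genuinely converges in the claimed region, which also explains the dichotomy in the hypotheses. As $x\to\infty$, $e^x-z\sim e^x$, so the integrand is $O(x^{\sigma-1}e^{-ax})$ and decays exponentially since $a>0$, with no constraint on $s$. As $x\to 0^+$: if $z\ne 1$ then $e^x-z\to 1-z\ne 0$ and the integrand is comparable to $x^{\sigma-1}$, integrable near $0$ exactly when $\sigma>0$; if $z=1$ then $e^x-1\sim x$ and the integrand is comparable to $x^{\sigma-2}$, integrable near $0$ exactly when $\sigma>1$. Hence the integral defines a holomorphic function of $s$ on $\Re(s)>0$ when $z\ne 1$, and on $\Re(s)>1$ when $z=1$; holomorphy is obtained by differentiating under the integral sign, dominating $\partial_s$ of the integrand locally uniformly in $s$, or equivalently via Morera.

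Finally, for $z=1$ the identity has already been proved on $\Re(s)>1$ in the first step, and nothing further is required. For $z\ne 1$ the first step only yields the identity on $\Re(s)>1$, whereas the Dirichlet series of $\Phi$ need not converge absolutely for $0<\Re(s)\le 1$, so I would close the argument by analytic continuation: the function $\Gamma(s)^{-1}\int_0^\infty x^{s-1}e^{(1-a)x}(e^x-z)^{-1}\,dx$ is holomorphic on $\Re(s)>0$ (the integral is holomorphic there by the previous paragraph and $\Gamma(s)^{-1}$ is entire), while $\Phi(s,a,z)$ extends holomorphically to $\Re(s)>0$ for $z\ne 1$; since the two agree on $\Re(s)>1$, they agree on $\Re(s)>0$ by the identity theorem. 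I expect the main obstacle to be precisely this last passage: one must verify that both sides are legitimately holomorphic on the enlarged half-plane before invoking uniqueness, which in turn rests on handling the behaviour of the integrand near $x=0$ carefully enough to get holomorphy and not merely pointwise convergence. The interchange of summation and integration in the first step, by contrast, is routine.
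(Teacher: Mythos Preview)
The paper does not supply its own proof of this lemma; it is quoted directly from Erd\'elyi et al.\ \cite[p.~53, (3)]{Er} and used as a black box. Your derivation is correct and is in fact the standard argument (essentially the one found in the cited reference): rewrite $(n+a)^{-s}$ via the scaled Gamma integral, sum the resulting geometric series in $ze^{-x}$, and justify the interchange by Tonelli for $\Re(s)>1$. Your convergence analysis of the integral near $x=0$ and $x=\infty$ is accurate and explains the stated dichotomy between $z=1$ and $z\ne 1$. The extension to $0<\Re(s)\le 1$ for $z\ne 1$ by analytic continuation is the right way to close the gap; one minor remark is that when $|z|<1$ the Tonelli step already works for all $\sigma>0$ since $\sum_{n\ge 0}|z|^n(n+a)^{-\sigma}<\infty$, so the continuation argument is only genuinely needed on the unit circle $|z|=1$, $z\ne 1$.
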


\begin{proof}[Proof of Proposition \ref{th:1}]
By the change of variables integration $x=e^{y}$ in (\ref{eq:gamhur0}),
\begin{equation*}
\begin{split}
&F_{\sigma, a, z} (t) = \int_{-\infty}^{\infty} 
\frac{e^{y(\sigma-1+{\rm{i}}t)} \exp ( (1-a) e^{y})}{f_{\sigma, a, z} (0) (\exp (e^{y})-z)} e^{y}dy = 
\int_{-\infty}^{\infty} 
e^{{\rm{i}}ty} \frac{e^{y\sigma} \exp ((1-a)e^{y})}{f_{\sigma, a, z} (0) (\exp (e^{y})-z)} dy.
\end{split}
\end{equation*}
First assume that $0 \ne z \in [-1,1]$. Then we obviously have $e^{y\sigma} \exp ((1-a)e^{y})/(\exp (e^{y})-z) >0$ for any $y \in {\mathbb{R}}$. On the other hand, it is well-known that $\Gamma (\sigma)>0$ for all $\sigma >0$. Furthermore, one has
\begin{equation}\label{inep:phiposiab}
\Phi (\sigma,a,z) = \sum_{n=0}^{\infty} \frac{z^n}{(n+a)^{\sigma}} = 
\sum_{n=0}^{\infty} \biggl( \frac{z^{2n}}{(2n+a)^{\sigma}} -\frac{z^{2n+1}}{(2n+1+a)^{\sigma}} \biggr) >0
\end{equation}
from $z^{2n} > |z^{2n+1}| >0$ and $(2n+1+a)^{\sigma} > (2n+a)^{\sigma} >0$. Hence we have $f_{\sigma, a, z} (0) >0$. By the definition of $P_{\sigma, a, z} (y)$ given by (\ref{eq:pdef1}), one has $\int_{-\infty}^{\infty}P_{\sigma, a, z} (y) dy=1$ for any $\sigma >1$. Thus $P_{\sigma, a, z} (y)$ is a probabilistic density function when $\sigma >1$. 

Next suppose $z \not \in [-1,1]$. Then there exist real numbers $y_1$ and $y_2$ such that at least one of $f_{\sigma, a,z} (0) (\exp (e^{y_1})-z)$ and $f_{\sigma, a,z} (0) (\exp (e^{y_2})-z)$ are not real numbers. Hence
$$
\frac{e^{y\sigma} \exp ((1-a)e^{y})}{f_{\sigma, a,z} (0) (\exp (e^{y})-z)} dy
$$
is not a measure but complex signed measure. On the other hand, we have
$$
\bigl| f_{\sigma, a, z} (0) F_{\sigma, a, z} (t) \bigr| \le 
\int_{-\infty}^{\infty}  \frac{e^{y\sigma} \exp ((1-a)e^{y})}{|\exp (e^{y})-z|} dy \le
\int_0^\infty \frac{x^{\sigma-1} e^{(1-a)x}}{e^x-1} dx < \infty.
$$
Therefore, $F_{\sigma, a, z}$ is not a characteristic function when $z \not \in [-1,1]$ since any complex signed measure with finite total variation is uniquely determined by the Fourier transform. 
\end{proof}

We quote following lemmas to show (1) of Theorem \ref{th:2}.
\begin{lemma}[{see \cite[Lemma 2.1]{NaHuLe}}]
\label{lem:12.2ac}
For $0 < \sigma <1$ we have the integral representation
\begin{equation}
\label{eq:gamhurac}
\Gamma (s) \zeta (s,a) =  \int_0^\infty \biggl( \frac{e^{(1-a)x}}{e^x-1} - \frac{1}{x} \biggr) x^{s-1} dx = 
\int_0^\infty \!\!\! H(a,x) x^{s-1} dx.
\end{equation}
\end{lemma}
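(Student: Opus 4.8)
The statement to prove is Lemma \ref{lem:12.2ac}, the integral representation
$$
\Gamma(s)\zeta(s,a) = \int_0^\infty H(a,x)\,x^{s-1}\,dx
$$
valid for $0<\sigma<1$, where $H(a,x) = e^{(1-a)x}/(e^x-1) - 1/x$.

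\medskip

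The plan is to start from the classical representation in Lemma \ref{lem:12.2}, namely $\Gamma(s)\zeta(s,a) = \int_0^\infty x^{s-1} e^{(1-a)x}/(e^x-1)\,dx$, which holds for $\sigma>1$, and then analytically continue it into the strip $0<\sigma<1$ by subtracting off the term responsible for the divergence at $x=0$. First I would record the asymptotics of the integrand near $x=0$: since $e^x-1 = x + O(x^2)$ and $e^{(1-a)x} = 1 + O(x)$, one has $e^{(1-a)x}/(e^x-1) = 1/x + O(1)$ as $x\to 0^+$, so the subtracted function $H(a,x)$ is bounded (indeed continuous) near $x=0$. Near $x=\infty$ the term $1/x$ is harmless and $e^{(1-a)x}/(e^x-1)$ decays exponentially because $0<a\le 1$ forces $1-a < 1$. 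Hence $\int_0^\infty H(a,x)\,x^{s-1}\,dx$ converges absolutely for every $s$ with $0<\sigma<1$ (the factor $x^{s-1}$ is integrable at $0$ precisely when $\sigma>0$, and the exponential decay handles $\infty$ for all $\sigma$), and by the usual Morera/Fubini argument it defines a holomorphic function of $s$ there.

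\medskip

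Next I would connect the two expressions in the overlap. For $\sigma>1$ write
$$
\int_0^\infty \frac{x^{s-1} e^{(1-a)x}}{e^x-1}\,dx = \int_0^\infty H(a,x)\,x^{s-1}\,dx + \int_0^\infty x^{s-2}\,dx,
$$
but the last integral diverges, so instead I would split at $x=1$: for $\sigma>1$,
$$
\int_0^1 \frac{x^{s-1}e^{(1-a)x}}{e^x-1}\,dx = \int_0^1 H(a,x)\,x^{s-1}\,dx + \int_0^1 x^{s-2}\,dx = \int_0^1 H(a,x)\,x^{s-1}\,dx + \frac{1}{s-1},
$$
and over $[1,\infty)$ the identity $e^{(1-a)x}/(e^x-1) = H(a,x) + 1/x$ gives
$$
\int_1^\infty \frac{x^{s-1}e^{(1-a)x}}{e^x-1}\,dx = \int_1^\infty H(a,x)\,x^{s-1}\,dx + \int_1^\infty x^{s-2}\,dx = \int_1^\infty H(a,x)\,x^{s-1}\,dx - \frac{1}{s-1}.
$$
Adding these, the two $\pm 1/(s-1)$ cancel and one obtains $\Gamma(s)\zeta(s,a) = \int_0^\infty H(a,x)\,x^{s-1}\,dx$ for $\sigma>1$. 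Since $\Gamma(s)\zeta(s,a)$ is holomorphic on $0<\sigma<1$ (the simple pole of $\zeta(s,a)$ at $s=1$ is outside this strip and $\Gamma$ has no poles there) and, by the previous paragraph, so is the right-hand side, the identity theorem for holomorphic functions extends the equality from the half-plane $\sigma>1$ to the strip $0<\sigma<1$, which is exactly the claim.

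\medskip

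The only genuinely delicate point is justifying that the right-hand side is holomorphic on the strip and that the splitting-at-$x=1$ manipulation is legitimate, i.e. that all the pieces converge absolutely in the relevant ranges so Fubini and differentiation under the integral apply; this rests entirely on the two asymptotic estimates $H(a,x) = O(1)$ as $x\to 0^+$ and $H(a,x) = O(e^{-ax})$ (or at least $O(x^{-1})$ plus exponentially small) as $x\to\infty$, both of which are elementary given $0<a\le 1$. Everything else is the standard Riemann-style trick of subtracting the principal part to push an integral representation past the boundary of its original domain, and the cancellation of the pole terms $\pm(s-1)^{-1}$ is the mechanism that makes $\zeta(s,a)$'s analyticity on the strip manifest.
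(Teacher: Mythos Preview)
Your second displayed equation, the split over $[1,\infty)$, is the problem: for $\sigma>1$ neither $\int_1^\infty H(a,x)\,x^{s-1}\,dx$ nor $\int_1^\infty x^{s-2}\,dx$ converges, because $H(a,x)\sim -1/x$ as $x\to\infty$ (the $1/x$ you subtracted does \emph{not} decay exponentially, contrary to your parenthetical ``the exponential decay handles $\infty$ for all $\sigma$''). So that split is illegitimate when $\sigma>1$, and you never actually establish the identity $\Gamma(s)\zeta(s,a)=\int_0^\infty H(a,x)\,x^{s-1}\,dx$ on any open set. There is then nothing for the identity theorem to propagate; and in any case $\{\sigma>1\}$ and $\{0<\sigma<1\}$ are disconnected, so even a valid identity on the first would not transfer to the second without an intermediate representation analytic across $\sigma=1$.

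The fix is to reverse the order of your two manipulations. Your $[0,1]$ split \emph{is} legitimate for $\sigma>1$ and yields
\[
\Gamma(s)\zeta(s,a)=\int_0^1 H(a,x)\,x^{s-1}\,dx+\frac{1}{s-1}+\int_1^\infty \frac{x^{s-1}e^{(1-a)x}}{e^x-1}\,dx,
\]
an identity between functions meromorphic on $\{\sigma>0\}$ (both integrals here are holomorphic for $\sigma>0$, and only the explicit $1/(s-1)$ carries the pole), so analytic continuation extends it to all of $\sigma>0$. \emph{Now} restrict to $0<\sigma<1$, where $1/(s-1)=-\int_1^\infty x^{s-2}\,dx$ converges, and absorb this into the last integral to produce $\int_1^\infty H(a,x)\,x^{s-1}\,dx$; adding the $[0,1]$ piece gives the claim. (The paper itself does not prove this lemma but quotes it from \cite{NaHuLe}.)
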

\begin{lemma}[{see \cite[Lemma 2.2]{NaHuLe}}]
\label{lem:negdefi}
The function $H(a,x)$ defined by (\ref{eq:defHax}) is negative for all $x>0$ if and only if $a \ge 1/2$. 
\end{lemma}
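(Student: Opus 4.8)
The plan is to reduce the sign question for $H(a,x)$ to a sign question for its numerator, and then to exploit monotonicity in $a$ so that only the boundary value $a=1/2$ needs genuine work. Since $x(e^x-1)>0$ for every $x>0$, the expression in (\ref{eq:defHax}) shows that $H(a,x)$ has the same sign as
\[
N(a,x) := x e^{(1-a)x} - e^x + 1 .
\]
A direct evaluation gives $N(a,x)\to 0$ as $x\downarrow 0$, so in every case the numerator vanishes at the origin; the behaviour just to the right of $0$ will therefore be decisive.

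First I would record the dependence on $a$. Differentiating, $\partial_a N(a,x) = -x^2 e^{(1-a)x} < 0$ for all $x>0$, so $N(a,x)$ is strictly decreasing in $a$. Consequently, to prove the ``if'' direction it is enough to establish $N(1/2,x)<0$ for all $x>0$: once this holds, $a\ge 1/2$ forces $N(a,x)\le N(1/2,x)<0$, and hence $H(a,x)<0$.

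For the boundary case $a=1/2$ I would argue by monotonicity in $x$. A short computation gives
\[
\partial_x N(1/2,x) = e^{x/2}\bigl(1 + \tfrac{x}{2} - e^{x/2}\bigr),
\]
and since $e^{x/2} > 1 + x/2$ for every $x>0$ (the strict form of $e^u>1+u$), the bracketed factor is negative, so $\partial_x N(1/2,x)<0$ on $(0,\infty)$. Combined with $N(1/2,0)=0$ this yields $N(1/2,x)<0$ for all $x>0$, completing the ``if'' direction via the reduction above.

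For the ``only if'' direction I would prove the contrapositive by a local analysis at the origin. One checks $N(a,0)=0$ and $\partial_x N(a,0)=0$, while $\partial_x^2 N(a,0) = 1-2a$. Hence if $a<1/2$ then $N(a,x) = \tfrac{1-2a}{2}x^2 + O(x^3)$ is strictly positive for all sufficiently small $x>0$, so $H(a,x)>0$ there and $H(a,\cdot)$ fails to be negative throughout $(0,\infty)$. The main obstacle is not any single computation but correctly locating $a=1/2$ as the threshold: the monotonicity in $a$ collapses the ``if'' part onto a single boundary value, and the second-order Taylor coefficient $1-2a$ at the origin is what pins the ``only if'' part, so the two halves hinge on the same critical quantity approached from opposite sides.
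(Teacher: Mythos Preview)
Your proof is correct. Note, however, that the paper does not actually supply its own proof of this lemma: it is quoted without proof from \cite[Lemma~2.2]{NaHuLe}. The only traces of the argument visible in the present paper are the series expansion
\[
H(a,x)=\frac{(1/2-a)x^2 + \bigl((1-a)^2/2!-1/3!\bigr)x^3+\cdots}{x^2+x^3/2!+\cdots}
\]
(cited as \cite[(2.4)]{NaHuLe} inside the proof of Lemma~\ref{lem:negdefi2}) and the observation there that the numerator $h(a,x)=xe^{(1-a)x}-e^x+1$ controls the sign.

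Compared with what can be inferred about the cited proof, your argument takes a slightly different route. The series display above suggests that \cite{NaHuLe} works directly with the power-series coefficients of $H(a,x)$, reading off the leading coefficient $1/2-a$ for the ``only if'' direction. Your Taylor computation $\partial_x^2 N(a,0)=1-2a$ recovers exactly this information without writing out the full series. For the ``if'' direction, your use of the monotonicity $\partial_a N(a,x)=-x^2 e^{(1-a)x}<0$ to collapse everything onto the single case $a=1/2$, followed by the one-line derivative $\partial_x N(1/2,x)=e^{x/2}(1+x/2-e^{x/2})<0$, is a clean self-contained argument that avoids any coefficient-by-coefficient analysis. Both approaches hinge on the same threshold quantity $1-2a$; yours simply isolates it by monotonicity rather than by series manipulation.
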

\begin{lemma}[{see \cite[(2.8)]{NaHuLe}}]
\label{lem:huzero28}
When $a \ge 1/2$, one has $\zeta (\sigma,a)<0$ for any $0< \sigma <1$. 
\end{lemma}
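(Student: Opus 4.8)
The plan is to read the sign of $\zeta(\sigma,a)$ directly off the integral representation of Lemma \ref{lem:12.2ac}, specialized to a real argument, using the sign information already supplied by Lemma \ref{lem:negdefi}. The point is that everything in sight becomes a positive-measure integral of a strictly negative integrand, so strict negativity is immediate; no further analytic work is needed.

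First I would take $s=\sigma$ real with $0<\sigma<1$ in the integral representation \eqref{eq:gamhurac}, which is valid precisely on this strip. This yields
$$
\Gamma (\sigma) \zeta (\sigma,a) = \int_0^\infty H(a,x)\, x^{\sigma-1}\, dx,
$$
where now every quantity is real and the integral converges by Lemma \ref{lem:12.2ac}. Next I would invoke Lemma \ref{lem:negdefi}: since $a \ge 1/2$, the integrand factor $H(a,x)$ is strictly negative for \emph{every} $x>0$. Combining this with the elementary fact that $x^{\sigma-1}>0$ for all $x>0$ (as $\sigma$ is real), the full integrand $H(a,x)\,x^{\sigma-1}$ is strictly negative throughout $(0,\infty)$. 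A convergent integral of a function that is strictly negative on all of $(0,\infty)$ is itself strictly negative, so $\Gamma(\sigma)\zeta(\sigma,a)<0$.

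Finally, since $\Gamma(\sigma)>0$ for every $\sigma>0$, dividing through gives $\zeta(\sigma,a)<0$ for all $0<\sigma<1$, as claimed. The main obstacle in this argument is essentially nil, because the genuinely nontrivial sign analysis of $H(a,x)$ has been isolated in Lemma \ref{lem:negdefi}, which I am free to assume; the only subtlety worth flagging is that I need \emph{strict} negativity of the conclusion, and this is guaranteed precisely because Lemma \ref{lem:negdefi} furnishes strict negativity of $H(a,x)$ at every point of $(0,\infty)$ rather than mere nonpositivity almost everywhere, ruling out a vanishing integral.
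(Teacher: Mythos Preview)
Your argument is correct. The paper does not actually supply a proof of this lemma: it is quoted verbatim from \cite[(2.8)]{NaHuLe}, and your derivation from Lemmas~\ref{lem:12.2ac} and~\ref{lem:negdefi} is precisely the intended route (and is indeed how the result is obtained in that reference).
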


\begin{proof}[Proof of (1) of Theorem \ref{th:2}]
By the change of variables integration $x=e^{y}$ in (\ref{eq:gamhurac}),
\begin{equation*}
\begin{split}
F_{\sigma, a,1} (t) = & \int_{-\infty}^{\infty} \frac{e^{y(\sigma-1+{\rm{i}}t)}}{f_{\sigma, a,1} (0)}
\biggl( \frac{\exp ( (1-a) e^{y})}{(\exp (e^{y})-1)} - \frac{1}{e^{y}} \biggr) e^{y}dy \\ = &
\int_{-\infty}^{\infty} e^{{\rm{i}}ty} \frac{e^{y\sigma}}{f_{\sigma, a,1} (0)}
\biggl( \frac{\exp ( (1-a) e^{y})}{(\exp (e^{y})-1)} - \frac{1}{e^{y}} \biggr) dy.
\end{split}
\end{equation*}

Suppose $a \ge 1/2$. Then we have $\exp ((1-a)e^{y})/(\exp (e^{y})-1) - e^{-y} < 0$ for any $y \in {\mathbb{R}}$ by Lemma \ref{lem:negdefi}. Moreover, it holds that $\zeta (\sigma,a)<0$ for all $0< \sigma <1$ from Lemma \ref{lem:huzero28}. By the definition of $P_{\sigma, a,1} (y)$ written as (\ref{eq:pdef2}), one has $\int_{-\infty}^{\infty}P_{\sigma, a,1} (y) dy=1$ for any $0< \sigma <1$. Thus $P_{\sigma, a,1} (y)$ is a probability density function when $a \ge 1/2$. 

Next suppose $0 <a <1/2$. Then there exist $y_1, y_2 \in {\mathbb{R}}$ such that 
$$
\frac{\exp ( (1-a) e^{y_1})}{(\exp (e^{y_1})-1)} - \frac{1}{e^{y_1}} >0,  \qquad
\frac{\exp ( (1-a) e^{y_2})}{(\exp (e^{y_2})-1)} - \frac{1}{e^{y_2}} < 0
$$
from Lemma \ref{lem:negdefi}. Hence 
\begin{equation}\label{mea:1}
\frac{e^{y\sigma}}{f_{\sigma, a} (0)}
\biggl( \frac{\exp ( (1-a) e^{y})}{(\exp (e^{y})-1)} - \frac{1}{e^{y}} \biggr) dy
\end{equation}
is not a measure but signed measure. From the view of \cite[(2.5) and (2.6)]{NaHuLe}, one has
\begin{equation*}
\begin{split}
&\int_{-\infty}^{0} \biggl| 
\frac{\exp ( (1-a) e^{y})}{(\exp (e^{y})-1)} - \frac{1}{e^{y}} \biggr| e^{y\sigma} dy = 
\int_0^1 \biggl| \frac{e^{(1-a)x}}{e^x-1} - \frac{1}{x} \biggr| x^{\sigma-1} dx < \infty ,\\
&\int_{0}^\infty \biggl| \frac{\exp ( (1-a) e^{y})}{(\exp (e^{y})-1)} - \frac{1}{e^{y}} \biggr| e^{y\sigma} dy =
\int_{1}^\infty \biggl| \frac{e^{(1-a)x}}{e^x-1} - \frac{1}{x} \biggr| x^{\sigma-1} dx < \infty.
\end{split}
\end{equation*}
Hence the signed measure (\ref{mea:1}) has finite total variation. It is widely known that any signed measure with finite total variation is uniquely determined by the Fourier transform. Therefore, $F_{\sigma, a,1} (t)$ is not a characteristic function when $0 <a <1/2$. 
\end{proof}

We use the following fact to prove (2) of of Theorem \ref{th:2}.
\begin{lemma}[see {\cite[(2.11)]{NaHuLe}}]\label{lem:phi1posi}
For any $\sigma >0$, $0 < a \le 1$ and $z \in [-1,1)$,  we have
\begin{equation}\label{ineq:hlzposi1}
\Phi (\sigma,a,z)  >0. 
\end{equation}
\end{lemma}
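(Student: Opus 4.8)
The plan is to read off the positivity directly from the integral representation of Lemma \ref{lem:12.2}. Every $z \in [-1,1)$ with $z \ne 0$ satisfies $0 < |z| \le 1$ and $z \ne 1$, so Lemma \ref{lem:12.2} is available for all $\sigma > 0$ and yields
\[
\Phi(\sigma,a,z) = \frac{1}{\Gamma(\sigma)} \int_0^\infty \frac{x^{\sigma-1} e^{(1-a)x}}{e^x - z}\, dx .
\]
The first step is to observe that the integrand is strictly positive on $(0,\infty)$: the factors $x^{\sigma-1}$ and $e^{(1-a)x}$ are positive, and $e^x - z \ge 1 - z > 0$ for every $x \ge 0$ since $e^x \ge 1$ and $z < 1$; in particular the denominator has no zero on the path of integration. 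The second step is the convergence of the integral, which is already implicit in the proof of Proposition \ref{th:1}: near $x=0$ the integrand behaves like $x^{\sigma-1}/(1-z)$, integrable because $\sigma > 0$, and near $x = \infty$ it is bounded by $x^{\sigma-1}e^{(1-a)x}/(e^x-1) = O(x^{\sigma-1}e^{-ax})$, integrable because $a > 0$. Combining these with $\Gamma(\sigma) > 0$ for $\sigma > 0$ gives $\Phi(\sigma,a,z) > 0$.

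As a cross-check one can instead argue from the Dirichlet series, mirroring (\ref{inep:phiposiab}): for $z \in (-1,1)$ the series converges absolutely, and grouping the terms in consecutive pairs gives
\[
\Phi(\sigma,a,z) = \sum_{n=0}^\infty z^{2n}\left( \frac{1}{(2n+a)^\sigma} - \frac{|z|}{(2n+1+a)^\sigma} \right) > 0 ,
\]
each bracket being positive because $|z| < 1$ and $(2n+a)^\sigma < (2n+1+a)^\sigma$; the endpoint $z = -1$ follows from the same pairing applied to the alternating series, which converges (conditionally) for every $\sigma > 0$. I would, however, present the integral argument as the main proof because it is uniform over $z \in [-1,1)$ and requires no separate treatment of the endpoint.

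I do not expect a genuine obstacle. The single point that deserves an explicit word is that Lemma \ref{lem:12.2} is being used in the whole strip $0 < \sigma \le 1$, not only for $\sigma > 1$: this is justified because, for $z \ne 1$, the function $\Phi(s,a,z)$ is entire while the right-hand side integral defines an analytic function of $s$ on $\Re(s) > 0$, and the two coincide for $\Re(s) > 1$; hence they coincide for all $\Re(s) > 0$. Since this is precisely the statement we are entitled to assume from Lemma \ref{lem:12.2}, nothing further is needed.
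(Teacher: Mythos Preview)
The paper does not actually prove this lemma: it is quoted verbatim from \cite[(2.11)]{NaHuLe} and immediately used in the proof of (2) of Theorem~\ref{th:2}. So there is no ``paper's own proof'' to compare against. Your integral argument is correct and, in fact, is exactly the mechanism the paper itself employs a few lines later: the bound~(\ref{in:pfabcon}) is precisely your convergence estimate, and the positivity of the integrand for $z\in[-1,1)$ is the content of your first step. In that sense your proof and the paper's surrounding text are fully aligned; you have simply made explicit what the paper outsources to the reference.

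Two small remarks. First, your displayed pairing formula in the cross-check,
\[
\sum_{n=0}^\infty z^{2n}\Bigl(\frac{1}{(2n+a)^\sigma}-\frac{|z|}{(2n+1+a)^\sigma}\Bigr),
\]
is literally correct only for $z\le 0$; for $z\in(0,1)$ the odd term enters with a plus sign, and positivity is then trivial. This is the same slip that appears in the paper's own display~(\ref{inep:phiposiab}), and it is harmless for the conclusion. Second, your closing paragraph about extending Lemma~\ref{lem:12.2} to $0<\sigma\le 1$ is unnecessary caution: the lemma as stated already covers $z\ne 1$ and $\Re(s)>0$, so nothing needs to be argued by analytic continuation.
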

\begin{proof}[Proof of (2) of Theorem \ref{th:2}]
We have to mention that the integral representation (\ref{eq:gamhur0}) converges absolutely for $\sigma >0$ when $z \ne 1$ from $e^x-z \ne 0$ for any $x \ge 0$ and
\begin{equation}\label{in:pfabcon}
\begin{split}
&|\Phi (s,a,z) \Gamma (s)| \le 
\int_0^1 \frac{x^{\sigma-1} e^{(1-a)x}}{|e^x-z|} dx + \int_1^\infty \frac{x^{\sigma-1} e^{(1-a)x}}{|e^x-z|} dx
\\ \le &
\int_0^1 \frac{x^{\sigma-1} e^{(1-a)x}}{|1-z|} dx + \int_1^\infty \frac{x^{\sigma-1} e^{(1-a)x}}{e^x-1} dx
< \infty .
\end{split}
\end{equation}
Hence we can show (2) of Theorem \ref{th:2} by using (\ref{ineq:hlzposi1}), (\ref{in:pfabcon}) and the method used in the proof of Proposition \ref{th:1}. 
\end{proof}

\subsection{Proofs of Theorems \ref{th:dz1} and \ref{th:dz2}}
We quote the following fact of the function $\Phi_2 (s_1,s_2, a, z_1, z_2)$ to show Lemma \ref{lem:posi}.
\begin{lemma}[{see \cite[Lemma 2.8]{NaHuLe}}]
\label{lem:ezhd1}
For $\Re (s_1) >1$ and $\Re (s_2) >1$, we have the integral representation
\begin{equation}
\label{eq:gamezhdz2}
\Gamma (s_1) \Gamma (s_2) \Phi_2 (s_1,s_2, a, z_1,z_2) = 
\int_0^\infty \frac{y^{s_2-1}}{e^y-z_2} \int_0^\infty \frac{x^{s_1-1}e^{(1-a)(x+y)}}{e^{x+y}-z_1} dx dy.
\end{equation}
Furthermore, we have the following:\\
${\rm{(1)}}$. When $z_1=z_2=1$, the integral formula (\ref{eq:gamezhdz2}) holds for $\Re (s_1) > 0$, $\Re(s_2)>1$ and $\Re (s_1+s_2) >2$. \\
${\rm{(2)}}$. When $z_1=1$ and $z_2 \ne 1$, the formula (\ref{eq:gamezhdz2}) holds for $\Re (s_1) > 1$ and $\Re(s_2)>0$. \\
${\rm{(3)}}$. When $z_1 \ne 1$ and $z_2 = 1$, the formula (\ref{eq:gamezhdz2}) holds for $\Re (s_1) > 0$ and $\Re(s_2)>1$. \\
${\rm{(4)}}$. When $z_1 \ne 1$ and $z_2 \ne 1$, the formula (\ref{eq:gamezhdz2}) holds for $\Re (s_1) > 0$ and $\Re(s_2)>0$. 
\end{lemma}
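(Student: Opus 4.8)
The plan is to establish \eqref{eq:gamezhdz2} first on the region $\Re(s_1),\Re(s_2)>1$, where every interchange is licensed by absolute convergence, and then to push it out to the ranges in (1)--(4) by analytic continuation after checking that the double integral still converges there. For the first step I would feed the classical Gamma integral $\Gamma(s)\,w^{-s}=\int_0^\infty x^{s-1}e^{-wx}\,dx$ (valid for $\Re(s)>0$, $\Re(w)>0$) into the defining series \eqref{eq:defezhdz}, taking $w=m+a$ with exponent $s_1$ and $w=m+n+a$ with exponent $s_2$; this gives
$$
\Gamma(s_1)\Gamma(s_2)\,\Phi_2(s_1,s_2,a,z_1,z_2)
=\sum_{m=0}^\infty\sum_{n=1}^\infty z_1^m z_2^{n-1}\int_0^\infty\!\!\int_0^\infty x^{s_1-1}y^{s_2-1}e^{-(m+a)x}e^{-(m+n+a)y}\,dx\,dy .
$$
Interchanging $\sum_{m,n}$ and $\iint$ is justified by Tonelli's theorem, since with $z_j,s_j$ replaced by $|z_j|,\sigma_j$ the right-hand side becomes $\Gamma(\sigma_1)\Gamma(\sigma_2)\Phi_2(\sigma_1,\sigma_2,a,|z_1|,|z_2|)$, which is finite throughout the domain of absolute convergence and in particular for $\sigma_1,\sigma_2>1$. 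Summing the two inner geometric series ($x,y>0$, $|z_1|,|z_2|\le1$), namely $\sum_{m\ge0}(z_1e^{-(x+y)})^m=e^{x+y}/(e^{x+y}-z_1)$ and $\sum_{n\ge1}(z_2e^{-y})^{n-1}e^{-y}=1/(e^y-z_2)$, and using $e^{-a(x+y)}e^{x+y}=e^{(1-a)(x+y)}$, yields \eqref{eq:gamezhdz2} for $\Re(s_1),\Re(s_2)>1$.

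For (1)--(4) I would prove that the integral on the right of \eqref{eq:gamezhdz2} converges locally uniformly in $(s_1,s_2)$ on the stated region; then, since $\Phi_2$ is known to continue analytically there (Komori \cite[Theorem 3.14]{KomoQua}, or \cite[Lemma 2.4]{NaHuLe}) and the integral is holomorphic there, the identity theorem propagates \eqref{eq:gamezhdz2}. Convergence at infinity is never an issue because $0<a\le1$ makes $e^{(1-a)(x+y)}/(e^{x+y}-z_1)$ decay like $e^{-a(x+y)}$; so the analysis reduces to integrability near $(x,y)=(0,0)$ and, when $z_2=1$, near the half-line $\{y=0,\ x>0\}$ (when $z_1=1$ the factor $e^{x+y}-1$ vanishes only at the origin). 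The local behaviour is elementary: $e^y-z_2$ stays bounded away from $0$ if $z_2\ne1$, while $e^y-1\asymp y$ if $z_2=1$; similarly for $e^{x+y}-z_1$ with $x+y$ in place of $y$. Feeding this in: in (4) the integrand is $\asymp x^{\sigma_1-1}y^{\sigma_2-1}$ near $0$, integrable iff $\sigma_1,\sigma_2>0$; in (3) the factor $e^y-1\asymp y$ contributes an extra $y^{-1}$, so one needs $\sigma_2>1$, and $\sigma_1>0$ then suffices; in (1) and (2) the factor $(x+y)^{-1}$ intervenes.

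The genuine obstacle is the corner estimate in cases (1) and (2), where $(x+y)^{-1}$ is not separately integrable in $x$ and in $y$. I would dispose of it with the weighted AM--GM inequality $x+y\ge x^{\theta}y^{1-\theta}$ ($0\le\theta\le1$), which bounds the corner contribution by $\iint_{(0,1]^2}x^{\sigma_1-1-\theta}\,y^{\sigma_2-1-(1-\theta)-\kappa}\,dx\,dy$, with $\kappa=0$ in (2) and $\kappa=1$ in (1) (the latter from $e^y-1\asymp y$). This converges as soon as $\theta<\sigma_1$ and $\theta>\kappa+1-\sigma_2$, and such a $\theta\in[0,1]$ exists precisely in the stated ranges: already $\theta=1$ works in (2) when $\Re(s_1)>1$, $\Re(s_2)>0$, whereas in (1) one needs $\theta\in(2-\sigma_2,\min(1,\sigma_1))\ne\emptyset$, i.e. $\Re(s_1)>0$, $\Re(s_2)>1$, $\Re(s_1+s_2)>2$. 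Off the corner the estimates of (3)--(4) apply verbatim, so assembling the pieces gives the required convergence, and together with the first step and analytic continuation the lemma follows.
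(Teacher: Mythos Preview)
Your argument is correct. The first step---feeding the Gamma integral into the double series, swapping by Tonelli, and summing the two geometric progressions---is the standard derivation of such representations and is carried out cleanly. For the extensions (1)--(4), your separation into the decay at infinity (handled by $0<a\le 1$) and the local analysis near $y=0$ and near the corner $(0,0)$ is the right dichotomy, and the weighted AM--GM bound $x+y\ge x^{\theta}y^{1-\theta}$ is exactly the device that converts the non-separable singularity $(x+y)^{-1}$ into a product one; your determination of the admissible $\theta$-ranges matches the stated regions precisely. One cosmetic point: you do not actually need to invoke Komori's meromorphic continuation for the identity theorem, since in each of (1)--(4) the stated region is already one where both sides are manifestly holomorphic (the integral by your locally uniform convergence, and $\Gamma(s_1)\Gamma(s_2)\Phi_2$ either by the defining Dirichlet series in case~(1) or by the very integral representation you have just shown to converge in cases~(2)--(4)).

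As for comparison with the paper: the present paper does not prove this lemma at all. It is quoted from the companion paper \cite[Lemma~2.8]{NaHuLe}, so there is no in-text proof to compare yours against. Your approach is the natural one and almost certainly coincides with what is done in \cite{NaHuLe}; in any case it is complete and self-contained.
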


From the lemma above, we have the next properties of $\Phi_2 (\sigma_1,\sigma_2, a, z_1, z_2)$.
\begin{lemma}\label{lem:posi}
One has the following:\\
${\rm{(1)}}$. Let $z_1=z_2=1$. Then $\Phi_2 (\sigma_1,\sigma_2, a, 1, 1) >0$ for $\sigma_1 > 0$, $\sigma_2>1$ and $\sigma_1+\sigma_2 >2$. \\
${\rm{(2)}}$. Let $z_1=1$ and $0 \ne z_2 \in [-1, 1)$. Then $\Phi_2 (\sigma_1,\sigma_2, a, 1, z_2) >0$ for $\sigma_1 > 1$, $\sigma_2>0$. \\
${\rm{(3)}}$. Let $0 \ne z_1 \in  [-1,1)$ and $z_2 = 1$. Then $\Phi_2 (\sigma_1,\sigma_2, a, z_1, 1) >0$ for $\sigma_1 > 0$, $\sigma_2>1$. \\
${\rm{(4)}}$. Let $0 \ne z_1,z_2 \in [-1,1)$. Then $\Phi_2 (\sigma_1,\sigma_2, a, z_1, z_2) >0$ for $\sigma_1 > 0$, $\sigma_2>0$. 
\end{lemma}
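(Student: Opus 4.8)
The plan is to establish positivity of $\Phi_2(\sigma_1,\sigma_2,a,z_1,z_2)$ in each of the four cases by reducing the double integral in \eqref{eq:gamezhdz2} to an expression that is manifestly positive. First I would observe that, by Lemma \ref{lem:ezhd1}, in each of the four cases the integral representation
\begin{equation*}
\Gm(\sigma_1)\Gm(\sigma_2)\Phi_2(\sigma_1,\sigma_2,a,z_1,z_2) =
\int_0^\infty \frac{y^{\sigma_2-1}}{e^y-z_2} \int_0^\infty \frac{x^{\sigma_1-1}e^{(1-a)(x+y)}}{e^{x+y}-z_1}\,dx\,dy
\end{equation*}
is valid and absolutely convergent on the stated $\sigma$-ranges. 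Since $\Gm(\sigma_1),\Gm(\sigma_2)>0$ for $\sigma_1,\sigma_2>0$, it suffices to prove the double integral on the right is strictly positive.

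The inner integral over $x$ is the quantity whose positivity is essentially the content of Lemma \ref{lem:phi1posi}: writing $u = e^y$ and substituting $x \mapsto x - y$ (or directly), $\int_0^\infty x^{\sigma_1-1}e^{(1-a)(x+y)}(e^{x+y}-z_1)^{-1}\,dx$ is, up to the harmless positive factor coming from the shift, of the same shape as $\Gm(\sigma_1)\Phi(\sigma_1,\cdot,z_1)$ and hence positive whenever $z_1 \in [-1,1)$ (cases (3) and (4)), by the argument of Lemma \ref{lem:phi1posi} — pair consecutive terms of the alternating series, or equivalently use the change of variables $x = e^y$ and the positivity of the corresponding Dirichlet series \eqref{inep:phiposiab}. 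When $z_1 = 1$ (cases (1) and (2)), the inner integrand $x^{\sigma_1-1}e^{(1-a)(x+y)}(e^{x+y}-1)^{-1}$ is still strictly positive for all $x,y>0$, so the inner integral is positive for that reason directly. In all four cases, then, the inner integral defines a strictly positive function of $y$ on $(0,\infty)$.

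It remains to integrate against $y^{\sigma_2-1}(e^y-z_2)^{-1}$. For cases (1) and (3), $z_2 = 1$ and the factor $y^{\sigma_2-1}(e^y-1)^{-1}$ is positive for $y>0$, so the outer integral of a positive integrand is positive. For cases (2) and (4), $z_2\in[-1,1)$, so $e^y-z_2>0$ for all $y\geq 0$ and again the weight is positive; alternatively one can pair up $y$-intervals as in \eqref{inep:phiposiab}. Either way the outer integral is strictly positive, giving $\Phi_2(\sigma_1,\sigma_2,a,z_1,z_2)>0$ in all four cases. Strictness follows because the integrand is continuous and strictly positive on an open set of positive measure, so the integral cannot vanish.

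The only mildly delicate point — and the one I would write out carefully — is case (1) with $z_1=z_2=1$, where the range $\sigma_1>0$, $\sigma_2>1$, $\sigma_1+\sigma_2>2$ is needed not for positivity of the integrand (which is automatic) but for absolute convergence of \eqref{eq:gamezhdz2}; here one must invoke part (1) of Lemma \ref{lem:ezhd1} to justify that the integral representation is valid at all, and then positivity is immediate. In the remaining cases the integrand is pointwise positive and the convergence is guaranteed by the corresponding part of Lemma \ref{lem:ezhd1}, so there is essentially nothing to overcome. I expect the whole proof to be short, with the bulk of the work being the bookkeeping of which $(\sigma_1,\sigma_2)$-region goes with which case and citing the matching part of Lemma \ref{lem:ezhd1}.
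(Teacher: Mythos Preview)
Your proposal is correct and follows essentially the same approach as the paper: invoke the integral representation \eqref{eq:gamezhdz2} from Lemma \ref{lem:ezhd1} in the appropriate region and observe that the integrand is pointwise positive. The paper is marginally more direct---for (1) it simply cites the series definition \eqref{eq:defezhdz} (all terms positive when $z_1=z_2=1$), and for (2)--(4) it notes $e^y-z_2>0$ and $e^{x+y}-z_1\geq 0$ outright rather than routing through Lemma \ref{lem:phi1posi} or any pairing---but the substance is identical.
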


\begin{proof}
We immediately obtain (1) by the series expression (\ref{eq:defdzd1}). Consider the case (2). The integral representation (\ref{eq:gamezhdz2}) converges absolutely when $\Re (s_1) > 1$ and $\Re(s_2)>0$ since $e^y-z_2 \ne 0$ for any $y \ge 0$ (see (\ref{in:pfabcon})). Hence we have 
\begin{equation}
\label{eq:incase2}
\Gamma (\sigma_1) \Gamma (\sigma_2) \Phi_2 (\sigma_1,\sigma_2, a, 1,z_2) = 
\int_0^\infty \int_0^\infty \frac{y^{\sigma_2-1}}{e^y-z_2} \frac{x^{\sigma_1-1}e^{(1-a)(x+y)}}{e^{x+y}-1} dx dy
\end{equation}
when $\sigma_1 > 1$ and $\sigma_2>0$ from Fubini's theorem. Obviously, we have $e^{(1-a)(x+y)}>0$, $e^y-z_2 >0$ and $e^{x+y}-1 \ge 0$ for any $x,y \ge 0$. Therefore we have (2). We can prove (3) and (4), similarly. 
\end{proof}

\begin{proof}[Proof of Theorem \ref{th:dz1}]
By the change of variables integration $x=e^\eta$ and $y=e^\theta$ in (\ref{eq:gamezhdz2}),
\begin{equation*}
\begin{split}
F_{\vec{\sigma}, a, \vec{z}} (\vec{t}) =&  \int_{-\infty}^{\infty} \int_{-\infty}^{\infty}
\frac{e^{\theta(\sigma_2+{\rm{i}}t_2-1)}}{\exp(e^\theta)-1}  
\frac{e^{\eta(\sigma_1+{\rm{i}}t_1-z_2)} \exp((1-a)(e^\eta + e^\theta))}{f_{\vec{\sigma}, a, \vec{z}} (\vec{0})(\exp(e^\eta + e^\theta)-z_1)} e^\eta e^\theta d\eta d\theta \\ =&
\int_{-\infty}^{\infty} \int_{-\infty}^{\infty} e^{{\rm{i}}( t_1\eta + t_2\theta)}
\frac{e^{\sigma_1 \eta} e^{\sigma_2 \theta}  \exp((1-a)(e^\eta + e^\theta))}{f_{\vec{\sigma}, a, \vec{z}} (\vec{0}) (\exp(e^\theta)-z_2) (\exp(e^\eta + e^\theta)-z_1)} d\eta d\theta .
\end{split}
\end{equation*}
First assume that $0 \ne z_1,z_2 \in [-1,1]$. Then we immediately see that 
$$
\frac{e^{\sigma_1 \eta} e^{\sigma_2 \theta}  \exp((1-a)(e^\eta + e^\theta))}
{(\exp(e^\theta)-z_2) (\exp(e^\eta + e^\theta)-z_1)} >0
$$
for any $\eta,\theta \in {\mathbb{R}}$. On the other hand, $\Phi_2 (\sigma_1,\sigma_2, a, z_1,z_2) >0$ for all $\sigma_1>0$, $\sigma_2 >1$ and $\sigma_1+\sigma_2>2$ by Lemma \ref{lem:posi}. Hence we have $f_{\vec{\sigma}, a, \vec{z}} (\vec{0}) >0$. From the definition of $P_{\vec{\sigma}, a, \vec{z}} (\eta,\theta)$ given by (\ref{eq:qdefezh1}), one has $\int_{\mathbb{R}^2} P_{\vec{\sigma}, a, \vec{z}} (\eta,\theta) d\eta d\theta=1$ for any $\sigma_1>0$, $\sigma_2 >1$ and $\sigma_1+\sigma_2>2$. Thus $P_{\vec{\sigma}, a, \vec{z}} (\eta,\theta)$ is a probabilistic density function when $\sigma_1>0$, $\sigma_2 >1$ and $\sigma_1+\sigma_2>2$. 

Next suppose that $z_1 \not \in [-1,1]$ or $z_2 \not \in [-1,1]$. Then there exist $\eta_0, \eta_2, \theta_1,\theta_2 \in {\mathbb{R}}$ such that at least one of $f_{\vec{\sigma}, a, \vec{z}} (\vec{0}) (\exp(e^{\theta_j})-z_2) (\exp(e^{\eta_j} + e^{\theta_j})-z_1)$, where $j=1,2$, are not real numbers. Thus 
$$
\frac{e^{\sigma_1 \eta} e^{\sigma_2 \theta} \exp((1-a)(e^\eta + e^\theta))}
{f_{\vec{\sigma},a,\vec{z}} (\vec{0}) (\exp(e^\theta)-z_2) (\exp(e^\eta + e^\theta)-z_1)} d\eta d\theta
$$
is not a measure but a complex signed measure. Hence $F_{\vec{\sigma}, a, \vec{z}} (\vec{t})$ is not a characteristic function in this case. 
\end{proof}

\begin{proof}[Proof of (2), (3) and (4) of Theorem \ref{th:dz2}]
Consider the case when $z_1=1$ and $z_2 \ne 1$. Note that the integral (\ref{eq:incase2}) converges absolutely when $\sigma_1>1$ and $\sigma_2 >0$. Hence we have that $F_{\vec{\sigma}, a, \vec{z}} (\vec{t})$ is a characteristic function when $z_1=1$ and $0 \ne z_2 \in [-1,1)$ by using (2) of Lemma \ref{lem:posi} and modifying the proof of Theorem \ref{th:dz1}. When $z_1=1$ and $z_2 \not \in [-1,1)$, we can see that $F_{\vec{\sigma}, a, \vec{z}} (\vec{t})$ is not a characteristic function from the manner used in the proof of Theorem \ref{th:dz1} since we can find $\eta_0, \eta_2, \theta_1,\theta_2 \in {\mathbb{R}}$ such that at least one of $f_{\vec{\sigma}, a, \vec{z}} (\vec{0}) (\exp(e^{\theta_j})-z_2) (\exp(e^{\eta_j} + e^{\theta_j})-1)$, where $j=1,2$, are not real. Similarly, we can show the cases (3) and (4).
\end{proof}

In order to show (1) of Theorem \ref{th:dz2}, we quote the following lemmas. 
\begin{lemma}[{see \cite[(2.16)]{NaHuLe}}] \label{lem:ezh2}
For $0< \Re (s_1) <1$, $\Re (s_2) >1$ and $1<\Re (s_1+s_2)<2$, we have the integral representation
\begin{equation}
\label{eq:gamhur*}
\begin{split}
&\Gamma (s_1) \Gamma (s_2) \zeta_2 (s_1,s_2 \,; a) = \\
&\int_0^\infty \!\!\! \int_0^\infty \frac{y^{s_2-1}}{e^y-1} H(a,x+y)x^{s_1-1} dx dy +
\int_0^\infty \!\!\! \int_0^\infty \frac{x^{s_1-1}}{x+y} H(1,y)y^{s_2-1} dx dy,
\end{split}
\end{equation}
where $H(a,x)$ is defined by (\ref{eq:defHax}).
\end{lemma}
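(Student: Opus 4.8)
The plan is to start from the absolutely convergent double integral furnished by Lemma~\ref{lem:ezhd1}(1) with $z_1=z_2=1$, namely
\[
\Gamma(s_1)\Gamma(s_2)\zeta_2(s_1,s_2\,;a)=\int_0^\infty\!\!\int_0^\infty \frac{y^{s_2-1}}{e^y-1}\,\frac{x^{s_1-1}e^{(1-a)(x+y)}}{e^{x+y}-1}\,dx\,dy=:J(s_1,s_2),
\]
valid for $\Re(s_1)>0$, $\Re(s_2)>1$, $\Re(s_1+s_2)>2$, and to reach the target strip $\mathcal T=\{0<\Re(s_1)<1,\ \Re(s_2)>1,\ 1<\Re(s_1+s_2)<2\}$ by peeling off the singular parts encoded in $H$, followed by analytic continuation. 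Writing $\frac{e^{(1-a)(x+y)}}{e^{x+y}-1}=H(a,x+y)+\frac1{x+y}$ splits off exactly the first integral $I_1$ on the right of (\ref{eq:gamhur*}); the remaining work is to show that the leftover matches the second term $I_{2a}:=\int_0^\infty\!\int_0^\infty \frac{x^{s_1-1}}{x+y}H(1,y)y^{s_2-1}\,dx\,dy$.

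First I would fix the nonempty overlap region $\mathcal O=\{0<\Re(s_1)<1,\ \Re(s_2)>1,\ \Re(s_1+s_2)>2\}$. Using $H(a,u)=O(1)$ as $u\to0$ and $H(a,u)\sim-1/u$ as $u\to\infty$, one checks that $I_1$ converges absolutely in $\mathcal O$ (the condition $\Re(s_1)<1$ is exactly what tames the large-$x$ tail), and that the leftover
\[
K(s_1,s_2):=\int_0^\infty\!\!\int_0^\infty \frac{y^{s_2-1}}{e^y-1}\,\frac{x^{s_1-1}}{x+y}\,dx\,dy
\]
also converges absolutely there; hence the pointwise split is legitimate and $J=I_1+K$ on $\mathcal O$. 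Evaluating the inner integral by the beta integral $\int_0^\infty \frac{x^{s_1-1}}{x+y}\,dx=\frac{\pi}{\sin\pi s_1}\,y^{s_1-1}$ (valid for $0<\Re(s_1)<1$) and then $\int_0^\infty \frac{y^{s_1+s_2-2}}{e^y-1}\,dy=\Gamma(s_1+s_2-1)\zeta(s_1+s_2-1)$ (valid for $\Re(s_1+s_2)>2$) gives the closed form $K=\frac{\pi}{\sin\pi s_1}\,\Gamma(s_1+s_2-1)\,\zeta(s_1+s_2-1)$ on $\mathcal O$.

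The decisive step is to recognise that $I_{2a}$ equals the very same closed form, but in the target strip. The same beta-integral in $x$, followed by Lemma~\ref{lem:12.2ac} with $a=1$, i.e. $\int_0^\infty H(1,y)y^{u-1}\,dy=\Gamma(u)\zeta(u)$ for $0<\Re(u)<1$, applied with $u=s_1+s_2-1$, yields $I_{2a}=\frac{\pi}{\sin\pi s_1}\,\Gamma(s_1+s_2-1)\,\zeta(s_1+s_2-1)$, now valid precisely for $0<\Re(s_1)<1$ and $1<\Re(s_1+s_2)<2$, which is what dictates the shape of $\mathcal T$. Now on the domain $D=\{0<\Re(s_1)<1,\ \Re(s_2)>1\}$ (a product of a strip and a half-plane, hence connected) the function $I_1$ is holomorphic, $\Gamma(s_1)\Gamma(s_2)\zeta_2$ is meromorphic by Komori's continuation of $\Phi_2$ to $\C^2$ recalled in the introduction, and $\frac{\pi}{\sin\pi s_1}\Gamma(s_1+s_2-1)\zeta(s_1+s_2-1)$ is meromorphic. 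Since $\Gamma(s_1)\Gamma(s_2)\zeta_2-I_1=K$ coincides with this last meromorphic function on the nonempty open set $\mathcal O\subset D$, the identity theorem for meromorphic functions on a domain in $\C^2$ forces equality throughout $D$, in particular on $\mathcal T$; combining with the value of $I_{2a}$ on $\mathcal T$ gives $\Gamma(s_1)\Gamma(s_2)\zeta_2=I_1+I_{2a}$, which is exactly (\ref{eq:gamhur*}).

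The main obstacle is that the naive ``subtract everything at once'' approach fails: if one additionally replaces $\frac1{e^y-1}=H(1,y)+\frac1y$ inside $K$, the residual integral $\int_0^\infty\!\int_0^\infty \frac{x^{s_1-1}y^{s_2-2}}{x+y}\,dx\,dy$ has no region of convergence whatsoever, since after the $x$-integration it becomes $\frac{\pi}{\sin\pi s_1}\int_0^\infty y^{s_1+s_2-3}\,dy$, a scale-invariant divergent integral. Thus the three-term split is never simultaneously legitimate, and the point of the argument above is to never form that divergent piece: instead one evaluates $K$ and $I_{2a}$ separately in closed form and lets analytic continuation identify them across the gap $\Re(s_1+s_2)=2$. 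The only remaining care is the absolute-convergence and Fubini bookkeeping for $I_1$, $K$, and $I_{2a}$ in their respective ranges, all of which reduce to the asymptotics $H(a,u),H(1,u)=O(1)$ near $0$ and $\sim-1/u$ near $\infty$, together with verifying that $D$ is a domain so that the several-variable identity theorem applies.
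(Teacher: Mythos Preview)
The paper does not itself prove Lemma~\ref{lem:ezh2}; it is quoted from \cite[(2.16)]{NaHuLe} and used as a black box, so there is no in-paper argument to compare against. Your proof is correct and is precisely the kind of argument one would expect in \cite{NaHuLe}: start from the absolutely convergent representation of Lemma~\ref{lem:ezhd1}(1), peel off $H(a,x+y)$, evaluate the remainder $K$ in closed form as $\frac{\pi}{\sin\pi s_1}\Gamma(s_1+s_2-1)\zeta(s_1+s_2-1)$ via the beta integral and the Mellin integral for $\zeta$, and then recognise $I_{2a}$ as the same closed form in the target strip by Lemma~\ref{lem:12.2ac}, bridging the gap $\Re(s_1+s_2)=2$ by the identity theorem. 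Your discussion of why the naive three-term split never converges is a nice clarification of why this detour is forced.

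One small point to tighten: you verify absolute convergence of $I_1$ only on $\mathcal O$, but later invoke holomorphy of $I_1$ on all of $D=\{0<\Re(s_1)<1,\ \Re(s_2)>1\}$. In fact your own asymptotics $H(a,u)=O(1)$ near $0$ and $H(a,u)\sim-1/u$ at infinity already give absolute (and locally uniform) convergence of $I_1$ throughout $D$, with no constraint on $\Re(s_1+s_2)$; the conditions $\Re(s_1)<1$ and $\Re(s_2)>1$ alone handle the large-$x$ tail and the small-$y$ singularity respectively. Stating this explicitly closes the only gap in the analytic-continuation step.
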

\begin{lemma}[{see \cite[(2.20)]{NaHuLe}}] \label{lem:HuEZnega}
When $a \ge 1/2$, one has $\zeta_2 (\sigma_1,\sigma_2\,;a) < 0$ for $0<\sigma_1<1$, $\sigma_2>1$ and $1<\sigma_1+\sigma_2<2$. 
\end{lemma}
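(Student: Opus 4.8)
The plan is to read the sign of $\zeta_2(\sigma_1,\sigma_2;a)$ directly off the integral representation supplied by Lemma \ref{lem:ezh2}, evaluated at the real point $s_1=\sigma_1$, $s_2=\sigma_2$. First I would observe that the hypotheses $0<\sigma_1<1$, $\sigma_2>1$ and $1<\sigma_1+\sigma_2<2$ place $(\sigma_1,\sigma_2)$ exactly in the range where (\ref{eq:gamhur*}) is valid, so that
$$
\Gamma(\sigma_1)\Gamma(\sigma_2)\zeta_2(\sigma_1,\sigma_2;a)
= \int_0^\infty\!\!\!\int_0^\infty \frac{y^{\sigma_2-1}}{e^y-1}\,H(a,x+y)\,x^{\sigma_1-1}\,dx\,dy
+ \int_0^\infty\!\!\!\int_0^\infty \frac{x^{\sigma_1-1}}{x+y}\,H(1,y)\,y^{\sigma_2-1}\,dx\,dy.
$$
Call these two double integrals $I_1$ and $I_2$. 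Since $\Gamma(\sigma)>0$ for all $\sigma>0$, the sign of $\zeta_2(\sigma_1,\sigma_2;a)$ agrees with the sign of $I_1+I_2$, so it suffices to show $I_1<0$ and $I_2<0$.

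Next I would examine the integrands. In $I_1$ the weight $y^{\sigma_2-1}/(e^y-1)$ is positive for $y>0$ and $x^{\sigma_1-1}>0$ for $x>0$, while $H(a,x+y)<0$ for all $x+y>0$ by Lemma \ref{lem:negdefi}, applicable because $a\ge 1/2$; hence the integrand of $I_1$ is strictly negative throughout $(0,\infty)^2$. In $I_2$ the factors $x^{\sigma_1-1}/(x+y)$ and $y^{\sigma_2-1}$ are positive, and $H(1,y)<0$ again by Lemma \ref{lem:negdefi}, now applied with parameter $1\ge 1/2$; so the integrand of $I_2$ is strictly negative as well. Thus, provided both integrals converge absolutely, negativity of the integrands transfers to $I_1<0$ and $I_2<0$, whence $I_1+I_2<0$ and, dividing by $\Gamma(\sigma_1)\Gamma(\sigma_2)>0$, we obtain $\zeta_2(\sigma_1,\sigma_2;a)<0$.

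The only point needing care is the convergence check that legitimizes ``negative integrand implies negative value.'' The integral $I_1$ is controlled by the estimates already established for (\ref{eq:gamhur*}) in \cite{NaHuLe}. For $I_2$ the delicate factor is the inner integral $\int_0^\infty x^{\sigma_1-1}/(x+y)\,dx$, which converges at the origin because $\sigma_1>0$ and at infinity because $\sigma_1<1$, both of which hold in our region; after the $x$-integration the remaining $y$-integral is dominated by $|H(1,y)|\,y^{\sigma_2+\sigma_1-2}$, integrable near $0$ and $\infty$ since $1<\sigma_1+\sigma_2<2$ and $H(1,y)$ decays suitably. I expect no serious obstacle beyond this integrability bookkeeping: once Lemma \ref{lem:ezh2} furnishes the representation and Lemma \ref{lem:negdefi} furnishes the negativity of $H(a,\cdot)$ and $H(1,\cdot)$, the statement reduces to a pure sign argument.
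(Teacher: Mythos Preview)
Your proposal is correct and matches the intended argument: the paper does not prove this lemma itself but quotes it from \cite{NaHuLe}, and the proof there proceeds exactly as you outline, combining the integral representation of Lemma~\ref{lem:ezh2} with the sign information on $H(a,\cdot)$ from Lemma~\ref{lem:negdefi}. Indeed, your two integrands sum to $x^{\sigma_1-1}y^{\sigma_2-1}\mathcal{H}(a;x,y)$ in the notation of Lemma~\ref{lem:negdefi2}, whose negativity for $a\ge 1/2$ the paper derives in the same one-line fashion; your convergence bookkeeping is also on target.
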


Next we prove the following lemma to prove (1) of Theorem \ref{th:dz2}.
\begin{lemma}\label{lem:negdefi2}
The function
$$
{\mathcal{H}}(a\,;x,y) := \frac{H(a,x+y)}{e^y-1} + \frac{H(1,y)}{x+y} 
$$
is negative for all $x,y>0$ if and only if $a \ge 1/2$.
\end{lemma}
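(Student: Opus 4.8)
The plan is to prove Lemma~\ref{lem:negdefi2} by reducing the sign of $\mathcal{H}(a\,;x,y)$ to that of $H(a,x)$ via Lemma~\ref{lem:negdefi}. First I would handle the ``if'' direction. Suppose $a \ge 1/2$. Since $x,y>0$ we have $x+y>0$, so by Lemma~\ref{lem:negdefi} the value $H(a,x+y)$ is negative; likewise $e^y-1>0$, hence the first summand $H(a,x+y)/(e^y-1)$ is negative. For the second summand, note that $H(1,y)$ corresponds to the parameter value $a=1 \ge 1/2$, so again by Lemma~\ref{lem:negdefi} it is negative, and since $x+y>0$ the term $H(1,y)/(x+y)$ is negative. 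Adding two negative quantities gives $\mathcal{H}(a\,;x,y)<0$ for all $x,y>0$, which is one direction.

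For the ``only if'' direction I would argue by contrapositive: assume $0<a<1/2$ and produce $x,y>0$ with $\mathcal{H}(a\,;x,y)\ge 0$ (in fact $>0$). By Lemma~\ref{lem:negdefi}, when $0<a<1/2$ the function $x \mapsto H(a,x)$ is \emph{not} negative for all $x>0$, so there exists $x_0>0$ with $H(a,x_0)>0$ (strict positivity at some point follows since $H(a,\cdot)$ is continuous and not everywhere $\le 0$; if it were $\equiv 0$ on an interval that would already contradict $0<a<1/2$ via the explicit formula in (\ref{eq:defHax})). The idea is then to choose $y$ small and $x$ close to $x_0$ so that $x+y\approx x_0$ and $H(a,x+y)$ stays positive, while controlling the second term $H(1,y)/(x+y)$, which is negative but whose size we can make arbitrarily small relative to the first term by letting $y\downarrow 0$. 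Concretely, as $y\downarrow 0$ one has $H(1,y)\to H(1,0^+)$, a finite limit (from the expansion $e^{y}/(e^{y}-1)-1/y = 1/2 + O(y)$, so $H(1,0^+)=-1/2$... one should double-check the sign and finiteness using (\ref{eq:defHax})), so $H(1,y)/(x+y)$ stays bounded, whereas $H(a,x+y)/(e^y-1) \sim H(a,x_0)/y \to +\infty$. Hence for $y$ sufficiently small and $x$ chosen so that $x+y=x_0$, we get $\mathcal{H}(a\,;x,y)>0$.

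The main obstacle is the ``only if'' direction: I must be careful that the divergence $1/(e^y-1)\sim 1/y$ of the coefficient of $H(a,x+y)$ genuinely dominates the behavior of the second term, and that $H(a,x_0)>0$ can be arranged with $x_0$ bounded away from any zero of $H(a,\cdot)$ so that continuity keeps $H(a,x+y)$ positive for $x+y$ near $x_0$. One clean way to avoid limits entirely: fix $x_0$ with $H(a,x_0)>0$; set $x = x_0 - y$ for $y>0$ small enough that $x>0$ and $H(a,x+y)=H(a,x_0)>0$ exactly; then $\mathcal{H}(a\,;x_0-y,y) = H(a,x_0)/(e^y-1) + H(1,y)/x_0$, and since $H(a,x_0)/(e^y-1)\to+\infty$ as $y\downarrow 0$ while $H(1,y)/x_0\to H(1,0^+)/x_0$ is finite, the sum is positive for all sufficiently small $y$. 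This exhibits the required pair and completes the contrapositive. I would close by remarking that this lemma is exactly what is needed to run the proof of part~(1) of Theorem~\ref{th:dz2}, in the same way Lemma~\ref{lem:negdefi} powers the proof of part~(1) of Theorem~\ref{th:2}.
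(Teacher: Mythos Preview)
Your ``if'' direction coincides with the paper's. Your ``only if'' direction is correct but takes a different route from the paper. The paper sends both variables to $0$ along a ray $x=Ny$ and computes
\[
\lim_{y\to 0^+} y\,\mathcal{H}(a;Ny,y)=\Bigl(\tfrac12-a\Bigr)-\frac{1}{2(N+1)}
\]
via the expansion $H(a,x)\to 1/2-a$ as $x\to 0^+$, taking $N$ large to force positivity; it also records that $\mathcal{H}(a;x,y)<0$ for large $x,y$. You instead fix $x_0$ with $H(a,x_0)>0$, keep $x+y=x_0$, and let $y\downarrow 0$ so that the blow-up $1/(e^y-1)\sim 1/y$ of the first summand dominates the bounded term $H(1,y)/x_0\to -1/(2x_0)$. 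This is a clean argument and makes the reduction to Lemma~\ref{lem:negdefi} transparent.

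Two small points to tighten. First, Lemma~\ref{lem:negdefi} only guarantees that $H(a,\cdot)$ is not \emph{strictly} negative everywhere when $0<a<1/2$, which a priori allows $H(a,x_0)=0$ rather than $>0$; your continuity remark does not quite close this. The quickest fix is to note directly from~(\ref{eq:defHax}) that $H(a,0^+)=1/2-a>0$, so any sufficiently small $x_0>0$ works. Second, note that the paper's proof also establishes the additional fact that $\mathcal{H}(a;x,y)$ takes negative values when $0<a<1/2$; this is not needed for the lemma as stated, but it is what is actually invoked in the proof of Theorem~\ref{th:dz2}(1) to exhibit a genuine signed measure, so you may want to record it as well.
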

\begin{proof}
First suppose $a \ge 1/2$. Then we obtain ${\mathcal{H}}(a\,;x,y)<0$ by using Lemma \ref{lem:negdefi}.

Secondly, let $0<a<1/2$ and $N \in {\mathbb{N}}$. Then we have
$$
\lim_{y\to +0} y{\mathcal{H}}(a\,;Ny,y) = \frac{1}{2} - a + \frac{1}{N+1} \times \biggl( \frac{-1}{2}\biggr) 
$$
by using the following formula (see also \cite[(2.4)]{NaHuLe})
$$
H(a,x) = \frac{(1/2-a)x^2 + ((1-a)^2/2!-1/3!)x^3+ \cdots}{x^2+ x^3/2! + \cdots} .
$$
Hence we obtain ${\mathcal{H}}(a\,;Nx,y)>0$ when $y$ and $x=Ny$ are sufficiently small and $N$ is sufficiently large (for example, take $N=[y^{-1/2}]$). Note that 
$$
h(a,x):=x(e^x-1)H(a,x) = xe^{(1-a)x} - e^x +1<0
$$ 
when $x$ is sufficiently large since one has $e^{1-a} < e$. Thus ${\mathcal{H}}(a\,;x,y)<0$ when $x$ and $y$ are sufficiently large.
\end{proof}

\begin{proof}[Proof of (1) of Theorem \ref{th:dz2}]
By the change of variables integration $x=e^\eta$ and $y=e^\theta$ in (\ref{eq:gamhur*}), it holds that
\begin{equation*}
\begin{split}
&f_{\vec{\sigma}, a, \vec{1}} (\vec{0}) F_{\vec{\sigma}, a, \vec{1}} (\vec{t}) \\
&= \int_{-\infty}^{\infty} \int_{-\infty}^{\infty} \left(
\frac{H(a,e^\eta+e^\theta)}{\exp(e^\theta)-1} + \frac{H(1,e^\theta)}{e^\eta + e^\theta} \right)
e^{\eta(\sigma_1+{\rm{i}}t_1-1)} e^{\theta(\sigma_2+{\rm{i}}t_2-1)} e^\eta e^\theta d\eta d\theta \\ &=
\int_{-\infty}^{\infty} \int_{-\infty}^{\infty} e^{{\rm{i}}( t_1\eta + t_2\theta)} \left(
\frac{H(a,e^\eta+e^\theta)}{\exp(e^\theta)-1} + \frac{H(1,e^\theta)}{e^\eta + e^\theta} \right)
e^{\sigma_1 \eta} e^{\sigma_2 \theta} d\eta d\theta .
\end{split}
\end{equation*}

Suppose $a \ge 1/2$. Then we have $H(a,e^\eta+e^\theta)(\exp(e^\theta)-1)^{-1} + H(1,e^\theta) (e^\eta + e^\theta)^{-1} < 0$ for any $\eta,\theta \in {\mathbb{R}}$ by Lemma \ref{lem:negdefi2}. Furthermore, one has $\zeta (\sigma_1,\sigma_2 \,;a)<0$ for all $0<\sigma_1<1$, $\sigma_2>1$ and $1<\sigma_1+\sigma_2<2$ by Lemma \ref{lem:HuEZnega}. Obviously we have $\int_{\mathbb{R}^2} P_{\vec{\sigma}, a, \vec{1}} (\eta, \theta) d\eta d\theta =1$ for any $0<\sigma_1<1$, $\sigma_2>1$ and $1<\sigma_1+\sigma_2<2$ from the definition of $P_{\vec{\sigma}, a, \vec{1}} (\eta, \theta)$ given by (\ref{eq:qdef2}). Therefore, $P_{\vec{\sigma}, a, \vec{1}} (\eta, \theta)$ is a probability density function when $a \ge 1/2$. 

Next suppose $0 <a <1/2$. Then there exist $\eta_1,\eta_2, \theta_1, \theta_2 \in {\mathbb{R}}$ such that 
$$
\frac{H(a,e^{\eta_1}+e^{\theta_1})}{\exp(e^{\theta_1})-1} + 
\frac{H(1,e^\theta_1)}{e^{\eta_1} + e^{\theta_1}} >0,  \qquad
\frac{H(a,e^{\eta_2}+e^{\theta_2})}{\exp(e^{\theta_2})-1} + 
\frac{H(1,e^{\theta_2})}{e^{\eta_2} + e^{\theta_2}} < 0
$$
by Lemma \ref{lem:negdefi2}. Hence 
\begin{equation}\label{mea:2}
\frac{e^{\sigma_1 \eta} e^{\sigma_2 \theta}}{f_{\vec{\sigma}, a} (\vec{0})} \left(
\frac{H(a,e^\eta+e^\theta)}{\exp(e^\theta)-1} + \frac{H(1,e^\theta)}{e^\eta + e^\theta} \right) d\eta d\theta 
\end{equation}
is not a measure but signed measure. From the view of 
\begin{equation*}
\begin{split}
&\int_1^\infty \!\!\! \int_1^\infty \frac{y^{\sigma_2-1}}{e^y-1} |H(a,x+y)| x^{\sigma_1-1} dx dy  < \infty ,\\
&\int_0^\infty \!\!\! \int_0^\infty \frac{x^{\sigma_1-1}}{x+y} |H(1,y)| y^{\sigma_2-1} dx dy < \infty
\end{split}
\end{equation*}
(see the proof of \cite[Proposition 2.6]{NaHuLe}), the signed measure (\ref{mea:2}) has finite total variation. Therefore, $F_{\vec{\sigma}, a, \vec{1}} (\vec{t})$ is not a characteristic function when $0 <a <1/2$ since any signed measure with finite total variation is uniquely determined by the Fourier transform. 
\end{proof}

\begin{remark}\label{re:las}
Let $1/2 < \sigma_0 <1$ such that $\zeta_2 (\sigma_0, \sigma_0 \,; a)=0$. Then
$$
\frac{\Gamma (\sigma_0 + {\rm {i}} t)^2}{\Gamma (\sigma_0)^2} 
\frac{\zeta_2 (\sigma_0 + {\rm {i}} t, \sigma_0 + {\rm {i}} t \,; a)}{\zeta_2 (\sigma_0, \sigma_0 \,; a)}
$$
is not a characteristic function since one has $0<\Gamma (\sigma_0) <\infty$ and $\zeta_2 (\sigma_0, \sigma_0 \,; a)=0$ (see \cite[Proposition 1.5]{NaHuLe}). Thus we need the non-existence of real zeros of zeta functions in oder to define zeta distributions.
\end{remark}

 
\end{document}